\title{An optimal boundedness on weak $\bQ$-Fano threefolds}
\author{Jungkai A. Chen and Meng Chen}
\address{\rm Taida Institute for Mathematical Sciences,
National Center for Theoretical Sciences, Taipei Office, and
Department of Mathematics, National Taiwan University, Taipei, 106,
Taiwan} \email{jkchen@math.ntu.edu.tw}
\address{\rm Institute of Mathematics, Fudan University,
Shanghai, 200433, People's Republic of China}
\email{mchen@fudan.edu.cn}
\thanks{The first author was partially supported by TIMS, NCTS/TPE
and  National Science Council of Taiwan. The second author was
supported by both the Program for New Century Excellent Talents in
University (\#NCET-05-0358) and the National Outstanding Young
Scientist Foundation (\#10625103)}
\newcommand{\bQ}{{\mathbb Q}}
\newcommand{\rounddown}[1]{\lfloor{#1}\rfloor}
\newcommand\OO{{\mathcal{O}}}
\newcommand\tp{{\tilde{P}}}
\newcommand\mB{{B}}
\newtheorem{thm}{Theorem}[section]
\newtheorem{lem}[thm]{Lemma}
\newtheorem{prop}[thm]{Proposition}
\theoremstyle{definition}
\newtheorem{setup}[thm]{}
\newtheorem{exmp}[thm]{Example}
\theoremstyle{remark}
\newtheorem{CLM}{Claim}
\begin{document}
\begin{abstract}
Let $X$ be a terminal weak $\bQ$-Fano threefold. We prove that
$P_{-6}(X)>0$ and $P_{-8}(X)>1$. We also prove that the
anti-canonical volume has a universal lower bound $-K_X^3 \geq
1/330$. This lower bound is optimal.
\end{abstract}
\maketitle

\section{\bf Introduction}

A threefold $X$ is said to be a  terminal (resp. canonical)
$\bQ$-Fano threefold if $X$ has at worst terminal (resp. canonical)
singularities and  $-K_X$ is ample, where $K_X$ is a canonical Weil
divisor on $X$.
 $X$ is called a {\it terminal weak $\bQ$-Fano threefold} if $X$
has at worst terminal  singularities and  $-K_X$ is nef and big.

We are interested in a conjecture of Miles Reid \cite[Section
4.3]{YPG} which says that $P_{-2}(X)>0$ for almost all $\bQ$-Fano
3-folds. There are already several known
examples with $P_{-2}=0$ by Iano-Fletcher \cite{Fletcher} and
Altinok and Reid \cite{AR}. Another question that we are interested
in is the boundedness of $\bQ$-Fano 3-folds, which is equivalent to
the boundedness of the anti-canonical volume $-K^3_X$. Kawamata
\cite{KA} first showed the boundedness of $-K^3$ for terminal
$\bQ$-Fano 3-folds with Picard number 1. Then Koll\'ar, Miyaoka,
Mori and Takagi \cite{KMMT} gave the boundedness for all canonical
$\bQ$-Fano 3-folds. Recently Brown and Susuki \cite{BS} proved a
sharp lower bound of $-K^3$ for certain $\bQ$-Fano 3-folds. However
a practical lower bound of $-K^3$ for all $\bQ$-Fano 3-folds is
still unknown, which is another motivation of our
paper.

Our main results are the following:

\begin{thm}\label{main} Let $X$ be a terminal weak $\bQ$-Fano
3-fold. Then
\begin{itemize}
\item[(i)] $P_{-4}>0$ with possibly one exception of basket of singularities;
\item[(ii)] $P_{-6}>0$ and $P_{-8}>1$;
\item[(iii)] $-K_X^3\geq \frac{1}{330}$. Furthermore
$-K^3_X=\frac{1}{330}$ if and only if the virtual basket of
singularities is
$$\{\frac{1}{2}(1,-1,1), \frac{1}{5}(1,-1,2),
\frac{1}{3}(1,-1,1), \frac{1}{11}(1,-1,2)\}.$$
\end{itemize}
\end{thm}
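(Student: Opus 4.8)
The common engine for all three parts is Reid's orbifold Riemann--Roch combined with Kawamata--Viehweg vanishing. The first step is to record that, since $-K_X$ is nef and big, so is $-(n+1)K_X=-nK_X-K_X$; hence vanishing gives $h^i(X,\OO_X(-nK_X))=0$ for $i>0$ and $n\ge 1$, and likewise $h^i(\OO_X)=0$ for $i>0$. Therefore $\chi(\OO_X)=1$ and each $P_{-n}=\chi(\OO_X(-nK_X))$ is a \emph{non-negative integer}. Reid's formula then expresses $P_{-n}$ as an explicit function of the single positive rational $-K_X^3$ and of the basket $B=\{(b_i,r_i)\}$ of fictitious terminal quotient singularities $\frac{1}{r_i}(1,-1,b_i)$ attached to $X$, of the shape
\[
P_{-n}=(2n+1)+\frac{1}{12}n(n+1)(2n+1)(-K_X^3)-\sum_i\sum_{j=1}^{n}\frac{\overline{jb_i}\,(r_i-\overline{jb_i})}{2r_i},
\]
where $\overline{x}$ denotes the residue modulo $r_i$. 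Specialising to $n=1$ turns this into an identity recovering $-K_X^3$ from $P_{-1}$ and $\sum_i \frac{b_i(r_i-b_i)}{r_i}$, so that the entire theorem becomes a purely arithmetic statement about admissible baskets, with the extra structural constraint $\sum_i(r_i-\tfrac{1}{r_i})\le 24$ coming from $-K_X\cdot c_2\ge 0$.

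The decisive issue is that this last constraint still permits infinitely many baskets, so the plan is to reduce the minimisation of $-K_X^3$ to a finite problem via the calculus of baskets under \emph{packing}: the operation replacing two entries $(b_1,r_1),(b_2,r_2)$ by the single entry $(b_1+b_2,\,r_1+r_2)$. The key is that under a packing $-K_X^3$ moves strictly and the correction sums $\sum_j \overline{jb_i}(r_i-\overline{jb_i})/2r_i$ move monotonically, so the inequalities $P_{-n}\ge 0$ propagate in a controlled direction. Consequently it suffices to analyse baskets that are extremal for this partial order, and the non-negativity constraints bound both the number of basket points and the weights $r_i$ once $-K_X^3$ is required to lie below a fixed threshold; this yields a finite, effectively enumerable list of candidate baskets.

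Parts (i) and (ii) are then proved from the same formula and feed directly into the optimisation: the assertions $P_{-4}\ge 1$ (with the single exceptional basket), $P_{-6}\ge 1$ and $P_{-8}\ge 2$ translate into explicit arithmetic inequalities on the pairs $(b_i,r_i)$ that eliminate most candidates. Running these inequalities through the finite enumeration produced by the packing reduction, one checks that the minimum of $-K_X^3$ over all surviving baskets equals $\frac{1}{330}$, attained exactly at
\[
\Bigl\{\tfrac{1}{2}(1,-1,1),\ \tfrac{1}{5}(1,-1,2),\ \tfrac{1}{3}(1,-1,1),\ \tfrac{1}{11}(1,-1,2)\Bigr\},
\]
and that this basket is the unique minimiser by direct comparison with its nearest competitors.

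The main obstacle is precisely the finiteness reduction and the subsequent bookkeeping. One must set up the packing/unpacking monotonicity so that ``decreasing $-K_X^3$'' is genuinely controlled, guarantee that the extremal basket is not lost to any of the $P_{-n}\ge 0$ constraints, and confirm that no competing basket attains or undercuts $\frac{1}{330}$. I expect the uniqueness of the minimiser, rather than the bound itself, to be the most delicate part, since it requires verifying that every slightly different admissible basket violates one of the integrality or non-negativity conditions for some small $n$.
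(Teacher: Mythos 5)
Your proposal follows essentially the same route as the paper: Reid's Riemann--Roch plus vanishing (so $\chi(\OO_X)=1$ and $P_{-n}=\chi(-nK_X)\ge 0$), the constraint $\gamma(B)=\sum_i \frac{1}{r_i}-\sum_i r_i+24\ge 0$ coming from $-K_X\cdot c_2\ge 0$, and the packing calculus on baskets reducing the minimisation of $-K_X^3$ to a finite enumeration, ending at the same extremal basket $\{(1,2),(2,5),(1,3),(2,11)\}$. The one point to keep straight in executing your plan is the direction of monotonicity: packing \emph{increases} $-K^3$ (and decreases $\sigma'$ and the $\Delta^n$), so the lower bounds are extracted from the canonical unpackings $B^{(0)}(B)\succ B^{(5)}(B)\succ B$, whose data are determined by the small anti-plurigenera, exactly as in the paper's inequality $-K^3\ge\frac{1}{12}(-1-P_{-1}-P_{-2}+P_{-4})$ and the ensuing case analysis.
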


The lower bound $\frac{1}{330}$ is optimal due to the following
example by Iano-Fletcher:

\begin{exmp}(\cite[Page 158]{Fletcher}) The general hypersurface
$$X_{66} \subset \mathbb{P}(1,5,6,22,33)$$ has
$-K_X^3=\frac{1}{330}$.
\end{exmp}

We now sketch our method of baskets and explain the idea of the
proofs. Recall that Reid's Riemann-Roch formula describes the Euler
characteristic by counting the contribution from virtual quotient
singularities, which he calls {\it basket}. We remark that when
either $K_X$ or $-K_X$ is nef and big, then Euler characteristic is
nothing but plurigenus or anti-plurigenus. Our method in
\cite{explicit} provides a synthetic way to recover baskets in
terms of plurigenera. Even though one can not expect to recover
baskets completely with limited information from plurigenera.
However the possibility of baskets is finite when $P_{-m}$ is small
for small $m$.

The behavior of baskets in $\bQ$-Fano case is somehow nicer. One
reason is that $\chi(\OO_X)=1$. And thanks to many effective
inequalities derived from the basket trick, we can prove that there
are only finitely many baskets with given $P_{-1}$ and $P_{-2}$ (see
\ref{finite}). Furthermore we can give a complete list of those
small anti-plurigenera formal baskets satisfying geometric
constrains (2.1),(2.2) and (2.3). This allows us to prove our
statements.
\bigskip

We would like to thank J\'anos Koll\'ar for his comment on the
possibility of using our basket consideration in \cite{explicit} to
classify $\bQ$-Fano 3-folds with small anti-volume. We are grateful
to Miles Reid for telling us their interesting examples with
$P_{-2}=0$. Thanks are also due to De-Qi Zhang for the effective
discussions during the preparation of this note.

\section{\bf Baskets of pairs and geometric inequalities}
In this section, we would like to recall our method developed in
\cite{explicit}, together with some geometrical inequalities which
will be the core of our proof.

A {\it basket} $B$  is a collection of pairs of integers (permitting
weights) $\{(b_i,r_i)|i=1,\cdots,t; b_i\ \text{coprime
 to}\ r_i\}$.\footnote{We may drop the assumption of coprime if we simply consider $\{(db,dr),*\}$ as $\{d \times (r,b),*\}$. These two baskets
 share all the same numerical properties in our discussion.} For simplicity, we will frequently write a basket in another way,
 say
$$B=\{(1,2), (1,2), (2,5)\}=\{2\times (1,2), (2,5)\}.$$

\begin{setup}\label{RR}{\bf Reid's formula.}
Let $X$ be a terminal weak $\bQ$-Fano 3-fold. According to Reid
\cite{YPG}, there is a basket of pairs
$$B_X=\{(b_i,r_i)|i=1,\cdots, t; 0<b_i\leq \frac{r_i}{2};b_i \text{ is coprime to } r_i\}.$$
such that, for all integer $n>0$,
$$P_{-n}(X)=\frac{1}{12}n(n+1)(2n+1)(-K_X^3)+(2n+1)-l(-n)   $$
where
$l(-n)=l(n+1)=\sum_i\sum_{j=1}^n\frac{\overline{jb_i}(r_i-\overline{jb_i})}{2r_i}$
and $\overline{\cdot}$ means the smallest residue mod $r_i$.

The above formula can be rewritten as:

\begin{eqnarray*}
{P}_{-1}&=&
\frac{1}{2}(-K_X^3+\sum_i \frac{b_i^2}{r_i})-\frac{1}{2}\sum_i b_i+3,\\
{P}_{-m}-{P}_{-(m-1)}&=& \frac{m^2}{2}(-K_X^3+\sum_i
\frac{b_i^2}{r_i})-\frac{m}{2}\sum_i b_i+2-\Delta^{m}
\end{eqnarray*}
where $\Delta^{m}= \sum_i
(\frac{\overline{b_im}(r_i-\overline{b_im})}{2r_i}-
\frac{b_im(r_i-b_im)}{2r_i})$ and $m\geq 2$.
\medskip

Notice that all the anti-plurigenera  $P_{-n}$ can be determined by
the basket $B_X$ and $P_{-1}(X)$. This leads us to set the
following definitions for {\it formal baskets}.
\end{setup}

We recall some definitions and properties of baskets. Especially, we
introduce the notion of packing. All details can be found in Section
4 of \cite{explicit}.

Suppose that $B=\{(b_i,r_i)|i=1,\cdots, t; 0<b_i\leq \frac{r_i}{2}; b_i
\text{\ is coprime to\ } r_i\}$ is a basket. Let $n>1$ be an
integer. For each $i$, set $l_i:=\rounddown{\frac{nb_i}{r_i}}$ and
define
$$\Delta_i^n:=l_ib_in-\frac{1}{2}(l_i^2+l_i)r_i,$$
which can be shown to be a non-negative integer. Define
$\Delta^n(B)=\sum_{i=1}^t \Delta_i^n$. One can verify  that
$\Delta^n(B)=\sum_i
(\frac{\overline{b_in}(r_i-\overline{b_in})}{2r_i}-
\frac{b_in(r_i-b_in)}{2r_i})$.

We set $\sigma(B):=\sum_i b_i$ and $\sigma'(B):=\sum_i
\frac{b_i^2}{r_i}$.

Given a basket $B=\{(b_i,r_i)|i=1,\cdots, t\}$ and assume that
$b_1+b_2$ is coprime to $r_1+r_2$, then we say that the new basket
$B':=\{(b_1+b_2, r_1+r_2), (b_3, r_3),\cdots, (b_t,r_t)\}$ is a
packing of $B$, denoted as $B\succ B'$. We call $B\succ B'$ a {\it
prime packing} if $b_1r_2-b_2r_1=1$. A composition of finite
packings is also called a packing. So the relation ``$\succeq$'' is
a partial ordering on the set of baskets.

\begin{setup}\label{p1}{\bf Properties  of packings.} As we have proved in
\cite{explicit}, a packing has the following properties:

Assume $B\succeq B'$. Then
\begin{itemize}
\item[i.] $\sigma(B)=\sigma(B')$ and
$\sigma'(B)\geq \sigma'(B')$;

\item[ii.] For all integer $n>1$, $\Delta^n(B)\geq
\Delta^n(B')$;
\end{itemize}
\end{setup}

\begin{setup}\label{formal}{\bf Formal baskets.} We call a pair
$(B, \tilde{P}_{-1})$ a {\it formal basket} if $B$ is
a basket and $\tilde{P}_{-1}$ is a non-negative integer. We write
$$(B, \tilde{P}_{-1})\succ (B',
\tilde{P}_{-1})$$ if $B\succ B'$.

We define some invariants of formal baskets. Considering a formal
basket ${\bf B}=(B, \tilde{P}_{-1})$, define $\tilde{P}_{-1}({\bf
B}):=\tilde{P}_{-1}$, the  volume
$$-K^3({\bf B}):=2\tilde{P}_{-1}+\sigma(B)-\sigma'(B)-6$$
and $\tilde{P}_{-2}({\bf B}):=5\tilde{P}_{-1}+\sigma(B)-10.$ So one
has
$$\tilde{P}_{-2}({\bf B})-\tilde{P}_{-1}({\bf B})=2(-K^3({\bf
B})+\sigma'(B))+2-\sigma(B).$$ For all $m\geq 2$, we define the
anti-plurigenus in an inductive way:
$$\tilde{P}_{-(m+1)}-\tilde{P}_{-m}=
\frac{1}{2}(m+1)^2(-K^3({\bf
B})+\sigma'(B))+2-\frac{m+1}{2}\sigma-\Delta^{m+1}(B).$$

Notice that $\tilde{P}_{-(m+1)}-\tilde{P}_{-m}$ is an integer
because $-K^3({\bf B})+\sigma'(B)=2\tilde{P}_{-1}+\sigma(B)-6$ has
the same parity as that of $\sigma(B)$.

Now if $B=B_X$ for a terminal weak $\bQ$-Fano 3-fold $X$ and
$\tilde{P}_{-1}=P_{-1}(X)$, then one can verify that $-K^3({\bf
B})=-K_X^3$ and $\tilde{P}_{-m}({\bf B})=P_{-m}(X)$ for all $m\geq
2$.


\end{setup}

\begin{setup}\label{p2}{\bf Properties  of packings (of formal baskets).} By \ref{p1}
and the above formulae, one can see the following immediate properties
of formal baskets:

Assume ${\bf B}:=(B, \tilde{P}_{-1})\succeq {\bf B'}:=(B',
\tilde{P}_{-1})$. Then
\begin{itemize}
\item[iii.] $-K^3({\bf B})+\sigma'(B)=-K^3({\bf B}')+\sigma'(B')$;
\item[iv.] $-K^3({\bf B})\leq -K^3({\bf B}')$;
\item[v.] $\tilde{P}_{-m}({\bf B})\leq \tilde{P}_{-m}({\bf B}')$ for all
$m\geq 2$.
\end{itemize}
\end{setup}

\begin{setup} {\bf Canonial sequence of baskets.}
Next we recall the ``canonical'' sequence of a basket $B$. Set
$S^{(0)}:=\{\frac{1}{n}|n\geq 2\}$,
$S^{(5)}:=S^{(0)}\cup\{\frac{2}{5}\}$ and inductively for all $n \ge
5$,
$$S^{(n)}:=S^{(n-1)}\cup\{\frac{b}{n}\mid \ 0<b<\frac{n}{2},\ b\
\text{coprime to}\ n\}.$$

Defined in this way, then each set $S^{(n)}$ gives a division of
the interval $(0,\frac{1}{2}]=\underset{i}\bigcup
[\omega_{i+1}^{(n)}, \omega^{(n)}_i]$ with
$\omega_{i}^{(n)},\omega_{i+1}^{(n)} \in S^{(n)}$. Let
$\omega_{i+1}^{(n)}=\frac{q_{i+1}}{p_{i+1}}$ and
$\omega^{(n)}_i=\frac{q_i}{p_i}$ with $\text{g.c.d}(q_l,p_l)=1$ for
$l=i,i+1$. Then it is easy to see that $q_ip_{i+1}-p_iq_{i+1}=1$
for all $n$ and $i$ (cf. \cite[Claim A]{explicit}).

Now given a basket ${B}=\{(b_i, r_i)| r=1,\cdots,t\}$, we would
like to define new baskets $B^{(n)}(B)$. For each $B_i=(b_i,r_i)
\in B$, if $\frac{b_i}{r_i} \in S^{(n)}$, then we set
$B^{(n)}_i:=\{(b_i,r_i)\}$. If $\frac{b_i}{r_i}\not\in S^{(n)}$,
then $\omega^{(n)}_{l+1} < \frac{b_i}{r_i} < \omega^{(n)}_{l}$ for
some $l$. We write $\omega^{(n)}_{l}=\frac{q_l}{p_l}$ and
$\omega^{(n)}_{l+1}=\frac{q_{l+1}}{p_{l+1}}$ respectively.
 In this situation, we can unpack $(b_i,r_i)$ to
$B^{(n)}_i:=\{(r_i q_l-b_ip_l) \times (q_{l+1},p_{l+1}),(-r_i
q_{l+1}+b_i p_{l+1}) \times (q_l,p_l)\}$. Adding up those
$B^{(n)}_i$, we get a new basket $B^{(n)}(B)$. $B^{(n)}(B)$ is
uniquely defined according to our construction and $B^{(n)}(B)
\succ B$ for all $n$. Notice that $B=B^{(n)}(B)$ for $n$ sufficiently large, e.g. for $n \ge \max\{r_i\}$.

In fact, we have
$$B^{(n-1)}(B)=B^{(n-1)}(B^{(n)}(B))
\succ B^{(n)}(B)$$
 for all $n\geq 1$ (cf. \cite[Claim B]{explicit}). Therefore we have a chain of baskets:
 $$ B^{(0)}(B)
\succ B^{(5)}(B) \succ ... \succ B^{(n)}(B) \succ ... \succ B. $$

The step $B^{(n-1)}(B) \succ B^{(n)}(B)$ can be achieved by a
number of  successive prime packings. Let $\epsilon_n(B)$ be the
number of such prime packings.
\end{setup}

We recall the following easy but essential properties.

\begin{lem}\label{delta}(\cite[Lemma 4.15]{explicit}) For the sequence $\{B^{(n)}(B)\}$, the following statements are true:
\begin{itemize}
\item[(i)]
$\Delta^j(B^{(0)}(B))= \Delta^j(B)$ for $j=3,4$;
\item[(ii)]
$\Delta^j(B^{(n-1)}(B))= \Delta^j(B^{(n)}(B))$ for all $j <n$;
\item[(iii)]
$\Delta^n(B^{(n-1)}(B))= \Delta^n(B^{(n)}(B))+\epsilon_n(B)$.
\end{itemize}
\end{lem}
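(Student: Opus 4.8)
The plan is to reduce all three assertions to a single identity attached to one prime packing, and then to establish that identity by a direct residue computation. Throughout, write $\delta^j(b,r)$ for the single-pair quantity $\Delta_i^j$, i.e. $\delta^j(b,r)=lbj-\tfrac12 r\,l(l+1)$ with $l=\lfloor jb/r\rfloor$, so that $\Delta^j(B)=\sum_i\delta^j(b_i,r_i)$ is additive over the pairs of $B$. By the structure of the canonical sequence recalled above (Claims A and B of \cite{explicit}), each step $B^{(n-1)}(B)\succ B^{(n)}(B)$ is a composition of $\epsilon_n(B)$ prime packings, each merging two Farey-adjacent pairs $(b_1,r_1),(b_2,r_2)$ with $b_1r_2-b_2r_1=1$ into their mediant $(b_1+b_2,r_1+r_2)$, whose denominator is exactly $n$. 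Hence, by additivity, everything follows from the per-packing statement: writing $r=r_1+r_2$ and $b=b_1+b_2$,
\[
\phi(j):=\delta^j(b_1,r_1)+\delta^j(b_2,r_2)-\delta^j(b,r)=\begin{cases}0,&1\le j<r,\\ 1,&j=r.\end{cases}
\]
Granting this, (ii) and (iii) are obtained by summing $\phi$ over the $\epsilon_n(B)$ packings of the step $B^{(n-1)}\to B^{(n)}$ (here $r=n$); and since every packing in the chain $B^{(0)}(B)\succ B^{(5)}(B)\succ\cdots\succ B$ has mediant denominator $\ge 5$, iterating the $j<n$ case of (ii) down the (finite) chain leaves $\Delta^3$ and $\Delta^4$ unchanged all the way from $B$ to $B^{(0)}(B)$, which is (i).

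To prove the per-packing identity I would split $\delta^j$ into a polynomial part and a residue part. Using $\overline{jb}=jb-lr$ one rewrites $\delta^j(b,r)=\tfrac{(jb)^2}{2r}-\tfrac{jb}{2}+\tfrac{\overline{jb}(r-\overline{jb})}{2r}$. When these are combined across the three pairs, the linear terms cancel because $b_1+b_2=b$, and the quadratic terms collapse via the \emph{unimodularity} $b_1r_2-b_2r_1=1$ of a prime packing: one computes $\tfrac{b_1^2}{r_1}+\tfrac{b_2^2}{r_2}-\tfrac{b^2}{r}=\tfrac{(b_1r_2-b_2r_1)^2}{r_1r_2r}=\tfrac{1}{r_1r_2r}$, so the polynomial contribution is exactly $\tfrac{j^2}{2r_1r_2r}$. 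Therefore $\phi(j)=\tfrac{j^2}{2r_1r_2r}+R(j)$, where $R(j)$ is the residue (sawtooth-parabola) contribution $\tfrac{\overline{jb_1}(r_1-\overline{jb_1})}{2r_1}+\tfrac{\overline{jb_2}(r_2-\overline{jb_2})}{2r_2}-\tfrac{\overline{jb}(r-\overline{jb})}{2r}$, and the claim becomes $R(j)=-\tfrac{j^2}{2r_1r_2r}$ for $j<r$ and $R(r)=1-\tfrac{r}{2r_1r_2}$.

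The case $j=r$ is a clean anchor: $\overline{rb}=0$, while $r_2b_1=b_2r_1+1$ and $r_1b_2=r_2b_1-1$ force $\overline{rb_1}=1$ and $\overline{rb_2}=r_2-1$, giving $R(r)=\tfrac{r_1-1}{2r_1}+\tfrac{r_2-1}{2r_2}=1-\tfrac{r}{2r_1r_2}$ and hence $\phi(r)=1$. The remaining, and genuinely hardest, part is the vanishing $\phi(j)=0$ for all $1\le j<r$ simultaneously: this requires controlling the three residues $\overline{jb_1},\overline{jb_2},\overline{jb}$ uniformly in $j$, tracking how $\overline{jb_1}+\overline{jb_2}$ relates to $\overline{jb}$ (they agree up to an additive $r$) via the Farey relation, and verifying that the single extra unit materializes only at $j=r$. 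I expect this residue bookkeeping to be the main obstacle; it can be organized by comparing the floors $\lfloor jb_i/r_i\rfloor$ with $\lfloor jb/r\rfloor$ for $j\le r$ and substituting, while the integrality of $\phi(j)$ together with the inequality $\phi(j)\ge 0$ from \ref{p1} furnishes a running sanity check that sharply constrains the admissible values.
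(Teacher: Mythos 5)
Note first that the paper does not prove this lemma at all: it is imported verbatim from \cite[Lemma 4.15]{explicit}, so there is no in-paper argument to compare against. Your reduction is nonetheless the right one and matches the structure the paper sets up: since each step $B^{(n-1)}(B)\succ B^{(n)}(B)$ is a composition of $\epsilon_n(B)$ prime packings whose mediants have denominator exactly $n$ (this is what Claims A and B of \cite{explicit}, recalled in the paper, guarantee), additivity of $\Delta^j$ reduces everything to your per-packing function $\phi$. Your polynomial/residue split is correct (the identity $\tfrac{b_1^2}{r_1}+\tfrac{b_2^2}{r_2}-\tfrac{b^2}{r}=\tfrac{1}{r_1r_2r}$ checks out), the anchor computation $\phi(r)=1$ is correct, and I verified $\phi$ numerically on several prime packings; the claimed dichotomy is true.

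The genuine gap is that the entire content of parts (i) and (ii) --- the simultaneous vanishing $\phi(j)=0$ for all $1\le j<r$ --- is exactly the step you defer as ``residue bookkeeping,'' and the crutches you propose cannot close it. Integrality of $\phi(j)$ plus the inequality $\phi(j)\ge 0$ from \ref{p1}(ii) only bounds $\phi$ from below; to get $\phi(j)=0$ you need the upper bound $\phi(j)\le 0$ for $j<r$, i.e.\ the residue identity $R(j)=-\tfrac{j^2}{2r_1r_2r}$, which is precisely what must be proved (a crude bound on the sawtooth terms gives only $R(j)\le\tfrac{r_1+r_2}{8}$, far too weak). Worse, \ref{p1}(ii) restricted to a prime packing \emph{is} the statement $\phi(j)\ge 0$, and it is itself an unproven citation from \cite{explicit} of essentially the same depth, so leaning on it is close to circular. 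To actually finish, one workable route is the lattice-point formula
$$\delta^j(b,r)=\sum_{m\ge 1}\max(jb-mr,0)=\#\{(x,y)\in\mathbb{Z}_{\ge 1}^2\,:\,xr+y\le jb\},$$
which converts $\phi(j)=0$ into an explicit bijection between the two triangles for $(b_1,r_1)$ and $(b_2,r_2)$ and the triangle for the mediant, using unimodularity to match points; the single unmatched point appears first at $j=r$. As written, your proposal sets the stage correctly but stops short of the lemma's mathematical core.
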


It follows that $\Delta^j(B^{(n)}(B))=\Delta^j(B) $ for all $j \le
n$ and
$$\epsilon_n(B)=\Delta^n(B^{(n-1)}(B))-\Delta^n(B^{(n)}(B))=\Delta^n(B^{(n-1)}(B))-\Delta^n(B).$$

Moreover, given a formal basket ${\bf B}=(B, \tilde{P}_{-1})$, we
can similarly consider $B^{(n)}({\bf B}):=(B^{(n)}(B),
\tilde{P}_{-1})$. It follows that
$$\tilde{P}_{-j}(B^{(n)}({\bf B}))=\tilde{P}_{-j}({\bf
B}) \text{ for all } j \le n.$$ Therefore we can realize the
canonical sequence of formal baskets as an approximation of formal
baskets via anti-plurigenera.

\begin{setup} {\bf Solving formal baskets by anti-plurigenera.} \end{setup}
We now study the relation between formal baskets and
anti-plurigenera more closely. For a given formal basket ${\bf B}=(B,\tilde{P}_{-1})$, we begin by computing the
non-negative number $\epsilon_n$ and $B^{(0)},B^{(5)}$  in terms of $\tilde{P}_{-m}$.
{}From the definition of $\tilde{P}_{-m}$ we get:
$$\sigma(B)=10-5\tilde{P}_{-1}+\tilde{P}_{-2},$$
\begin{eqnarray*}
\Delta^{m+1}&=&(2-5(m+1)+2(m+1)^2)+\frac{1}{2}(m+1)(2-
3m)\tilde{P}_{-1}\\
&&+\frac{1}{2}m(m+1)\tilde{P}_{-2}+
\tilde{P}_{-m}-\tilde{P}_{-(m+1)}.
\end{eqnarray*}
In particular, we have:
\begin{eqnarray*}
\Delta^3&=&5-6\tp_{-1}+4\tp_{-2}-\tp_{-3};\\
\Delta^4&=&14-14\tp_{-1}+6\tp_{-2}+\tp_{-3}-\tp_{-4};\\
\end{eqnarray*}

Assume $B^{(0)}(B)=\{n_{1,r}^0\times (1,r)| r\geq 2\}$. By Lemma
\ref{delta}, we have
$$\sigma(B)=\sigma(\mB^{(0)}(B))=\sum n_{1,r}^0;$$
$$\Delta^3(\mB)=\Delta^3(\mB^{(0)}(B))=n_{1,2}^0;$$
$$\Delta^4(\mB)=\Delta^4(\mB^{(0)}(B))=2n_{1,2}^0+n_{1,3}^0.$$
Thus one gets $\mB^{(0)}$ as follows: $$\begin{cases}
n_{1,2}^0=5-6\tp_{-1}+4\tp_{-2}-\tp_{-3}\\
n_{1,3}^0=4-2\tp_{-1}-2\tp_{-2}+3\tp_{-3}-\tp_{-4}\\
n_{1,4}^0=1+3\tp_{-1}-\tp_{-2}-2\tp_{-3}+\tp_{-4}-\sigma_5\\
n_{1,r}^0=n_{1,r}^0, r\geq 5,
\end{cases}$$
where $\sigma_5:=\sum_{r\geq 5} n_{1,r}^0$. A computation gives:
$$\epsilon_5=2+\tp_{-2}-2\tp_{-4}+\tp_{-5}-\sigma_5.$$
Therefore we get $\mB^{(5)}$ as follows:
$$\begin{cases}
n_{1,2}^5=3-6\tp_{-1}+3\tp_{-2}-\tp_{-3}+2\tp_{-4}-\tp_{-5}+\sigma_5\\
n_{2,5}^5=2+\tp_{-2}-2\tp_{-4}+\tp_{-5}-\sigma_5\\
n_{1,3}^5=2-2\tp_{-1}-3\tp_{-2}+3\tp_{-3}+\tp_{-4}-\tp_{-5}+\sigma_5\\
n_{1,4}^5=1+3\tp_{-1}-\tp_{-2}-2\tp_{-3}+\tp_{-4}-\sigma_5\\
n_{1,r}^5=n_{1,r}^0, r\geq 5
\end{cases}$$
Because $\mB^{(5)}=\mB^{(6)}$, we see $\epsilon_6=0$ and on the
other hand
$$\epsilon_6=3\tp_{-1}+\tp_{-2}-\tp_{-3}-\tp_{-4}-\tp_{-5}+\tp_{-6}-\epsilon=0$$
where $\epsilon:=2\sigma_5-n_{1,5}^0 \ge 0$.

Going on a similar calculation, one gets:
\begin{eqnarray*}
\epsilon_7&=&1+\tilde{P}_{-1}+\tilde{P}_{-2}-\tilde{P}_{-5}-\tilde{P}_{-6}
+\tilde{P}_{-7}-2\sigma_5+2n^0_{1,5}+n^0_{1,6}\\
 \epsilon_{8} &=&
2\tilde{P}_{-1}+\tilde{P}_{-2}+\tilde{P}_{-3}-\tilde{P}_{-4}-\tilde{P}_{-5}
-\tilde{P}_{-7}+\tilde{P}_{-8}\\
&&-3\sigma_5+3 n^0_{1,5}+2 n^0_{1,6}+n^0_{1,7}
\end{eqnarray*}

\begin{setup}\label{geometry}{\bf Geometric inequalities.}
We say that a formal basket ${\bf B}=(B,\tilde{P}_{-1})$ is
geometric if ${\bf B}=(B_X, P_{-1}(X))$ for a terminal weak
$\bQ$-Fano 3-fold $X$.
By \cite{KMMT}, one has that
$-K_X\cdot c_2(X)\geq 0$. Therefore \cite[10.3]{YPG} gives the
following inequality:
$$\gamma(B):=\sum_{i=1}^t \frac{1}{r_i}-\sum_{i=1}^t r_i+24\geq 0
\eqno{(2.1)}$$ Furthermore $-K^3({\bf B})=-K_X^3>0$ gives the
inequality:
$$\sigma'(B)<2\tp_{-1}+\sigma(B)-6.     \eqno{(2.2)}$$

Moreover by \cite[Lemma 15.6.2]{Kollar}, whenever $P_{-m}>0$ and
$P_{-n} >0$, one has
$$ P_{-m-n} \ge P_{-m}+P_{-n}-1. \eqno{(2.3)}$$
\end{setup}


\section{\bf Plurigenus}


We begin with the following observation, which follows immediately from the definition of packing and $\gamma$:

\begin{lem}\label{31} Given a packing of baskets $B\succ
 B'$, we have $\gamma(B) > \gamma(B')$. In particular, if
inequality (2.1) doesn't hold for $B$, then it doesn't hold for $
B'$.
\end{lem}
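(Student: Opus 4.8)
The plan is to prove Lemma~\ref{31} directly from the definition of a packing and the explicit formula for $\gamma$. Recall that a packing $B \succ B'$ replaces two pairs $(b_1,r_1)$ and $(b_2,r_2)$ by the single pair $(b_1+b_2, r_1+r_2)$, leaving the remaining pairs $(b_3,r_3),\dots,(b_t,r_t)$ untouched. Since $\gamma(B) = \sum_i \frac{1}{r_i} - \sum_i r_i + 24$, the contributions from the unchanged pairs cancel in the difference $\gamma(B) - \gamma(B')$, so everything reduces to comparing the contributions of the two affected pairs against their packed replacement.

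First I would isolate the relevant difference. Writing $R = r_1 + r_2$ for the merged denominator, we have
\begin{align*}
\gamma(B) - \gamma(B') &= \Bigl(\tfrac{1}{r_1} + \tfrac{1}{r_2} - (r_1 + r_2)\Bigr) - \Bigl(\tfrac{1}{R} - R\Bigr)\\
&= \tfrac{1}{r_1} + \tfrac{1}{r_2} - \tfrac{1}{r_1 + r_2},
\end{align*}
because the $-r_1 - r_2$ and $+R = +(r_1+r_2)$ terms cancel exactly and the constant $24$ drops out. The key observation is thus that the linear-in-$r_i$ part of $\gamma$ is \emph{unaffected} by packing (the sum $\sum_i r_i$ is preserved, just as $\sigma(B) = \sum_i b_i$ is preserved in \ref{p1}), so the entire change comes from the reciprocal terms.

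It then remains to check that $\frac{1}{r_1} + \frac{1}{r_2} - \frac{1}{r_1 + r_2} > 0$. Since each $r_i \geq 2 > 0$, we have $\frac{1}{r_1} > 0$ and $\frac{1}{r_2} > \frac{1}{r_1 + r_2}$ (the latter because $r_2 < r_1 + r_2$), whence the sum is strictly positive. This gives $\gamma(B) > \gamma(B')$. For a composition of finitely many prime packings the strict inequality follows by transitivity, since each elementary step strictly decreases $\gamma$. The ``in particular'' clause is then immediate: if $\gamma(B) < 0$, then $\gamma(B') < \gamma(B) < 0$, so inequality (2.1) fails for $B'$ as well.

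I do not expect any genuine obstacle here; the statement is essentially a one-line computation once the cancellation of the $\sum r_i$ terms is noticed. The only point requiring mild care is confirming that $\sum_i r_i$ is genuinely invariant under packing (so that the potentially dangerous large terms $-r_i$ do not spoil positivity), which is immediate from $(r_1) + (r_2) = (r_1 + r_2)$, exactly parallel to the invariance of $\sigma$ recorded in property~i of \ref{p1}.
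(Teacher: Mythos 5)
Your proof is correct and is exactly the computation the paper has in mind (the paper states the lemma as an immediate observation and omits the details): the sum $\sum_i r_i$ is invariant under packing, so the change in $\gamma$ reduces to $\frac{1}{r_1}+\frac{1}{r_2}-\frac{1}{r_1+r_2}>0$, and the general case follows by composing elementary steps. No issues.
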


\begin{setup}{\bf Natation and Convention.} For simplicity, we write $P_{-m}$ for $\tp_{-m}$ in what follows.

In this section, we mainly study those formal baskets $(B, P_{-1})$
satisfying inequalities (2.1) and (2.2).

We may and often do abuse the notation of ${\bf B}$ with $B$ when $\tilde{P}_{-1}$ is given.
\end{setup}


The following proposition provides an evidence about how our method
is going to work effectively.

\begin{prop}\label{finite}
Given $p_i\in \mathbb{Z}^{+}$, there are only finitely many formal
baskets admitting $(P_{-1},P_{-2})=(p_1,p_2)$ and satisfying
$(2.1)$.
\end{prop}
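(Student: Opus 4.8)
The plan is to bound each of the finitely many numerical parameters that determine a formal basket satisfying $(2.1)$, once $(P_{-1},P_{-2})=(p_1,p_2)$ is fixed. A formal basket $(B,P_{-1})$ is completely specified by $P_{-1}$ together with the basket $B=\{(b_i,r_i)\}$, so it suffices to show that finitely many baskets $B$ can occur. The key observation is that inequality $(2.1)$, namely $\gamma(B)=\sum_i\frac{1}{r_i}-\sum_i r_i+24\ge 0$, is an extremely strong constraint: since each $r_i\ge 2$ and each $\frac{1}{r_i}\le\frac12$, the term $\sum_i\frac1{r_i}$ is at most $\frac{t}{2}$ while $\sum_i r_i\ge 2t$. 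Thus $\gamma(B)\le 24-\frac{3t}{2}$, which forces the number $t$ of basket points to satisfy $t\le 16$. Moreover, for any single point with $r_i$ large, the contribution $-r_i$ to $\gamma$ is not compensated, so $(2.1)$ also forces a uniform bound $r_i\le 24$ on every denominator (more precisely $\sum_i r_i\le 24+\sum_i\frac1{r_i}\le 24+\frac{t}{2}$, bounding the total). Hence the set of baskets $B$ satisfying $(2.1)$ is \emph{already} finite, independently of $p_1,p_2$.

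Next I would use $p_1=P_{-1}$ and $p_2=P_{-2}$ to pin down $\sigma(B)$. From \ref{formal} we have the exact identity $\sigma(B)=10-5\tilde P_{-1}+\tilde P_{-2}=10-5p_1+p_2$, so $\sigma(B)=\sum_i b_i$ is a fixed integer. Since each $b_i\ge 1$, this gives yet another bound on $t$, and combined with $0<b_i\le\frac{r_i}{2}\le 12$ it confines each $b_i$ to a finite range. Thus the finitely many admissible denominator-multisets $\{r_i\}$ from the previous paragraph, together with the finitely many choices of $b_i\in\{1,\dots,\lfloor r_i/2\rfloor\}$ coprime to $r_i$ subject to $\sum_i b_i=10-5p_1+p_2$, yield only finitely many baskets $B$. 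As $P_{-1}=p_1$ is fixed, the formal baskets $(B,P_{-1})$ are finite in number, which is the assertion.

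The only point requiring care—and the step I would treat as the main obstacle—is making the reduction from $(2.1)$ to a uniform bound on the $r_i$ fully rigorous, since $(2.1)$ controls a sum and one must rule out configurations with one huge $r_i$ balanced by many small ones. The clean way is to note that dropping any point from $B$ strictly decreases $\sum_i\frac1{r_i}$ by at most $\frac12$ and decreases $\sum_i r_i$ by $r_i$, so $(2.1)$ applied to the full basket gives $r_{\max}\le \sum_i r_i\le 24+\sum_i\frac1{r_i}\le 24+\frac{t}{2}\le 24+8=32$; together with $t\le 16$ this bounds everything. I would also remark that $(2.1)$ is preserved under \emph{unpacking} but violated under packing (Lemma \ref{31}), so one may equivalently verify finiteness on the canonical models $B^{(0)}(B)$ if that is more convenient; but the direct counting argument above already suffices and avoids invoking the packing machinery.
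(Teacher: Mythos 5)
Your proof is correct and takes essentially the same route as the paper's: bound the number $t$ of pairs in $B$, then use $(2.1)$ to get $\sum_i r_i \le 24+\sum_i \frac{1}{r_i}\le 24+\frac{t}{2}$, which leaves only finitely many choices of the $r_i$ and hence of the $b_i\le r_i/2$. The only (harmless) difference is that you bound $t\le 16$ from $(2.1)$ alone, whereas the paper uses $t\le\sigma(B)=10-5p_1+p_2$; your variant yields the slightly stronger remark that the set of baskets satisfying $(2.1)$ is finite before $(P_{-1},P_{-2})$ is even fixed.
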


\begin{proof} The number of pairs in $\mB$ is upper bounded by
$\sigma=10-5p_1+p_2$. Assume $B=\{(b_i,r_i)\}$.
Then inequality (2.1) gives $$\sum_{i=1}^t r_i\leq
24+\sum_{i}\frac{1}{r_i}\leq 24+\frac{\sigma}{2}.$$ Clearly $B$ has
finite number of possibilities. This completes the proof.
%
%
\end{proof}

\begin{setup}{\bf Geometrically constrained baskets with $P_{-2}=0$.}
We now study formal baskets, satisfying (2.1) and (2.2), with
$P_{-1}=P_{-2}=0$ and will give a complete classification in this
situation. In fact, some other geometric constrains such as
$P_{-2}\geq P_{-1}$ are tacitly employed in our argument.

Given a formal basket ${\bf B}=(B,0)$ with $P_{-1}=P_{-2}=0$. The
initial basket $B^{(0)}(B)$ has datum:
$$ \begin{array}{l}
n^0_{1,2}=5-P_{-3},\\
n^0_{1,3}=4+3P_{-3}-P_{-4},\\
n^0_{1,4}=1-2P_{-3}+P_{-4}-\sigma_5.
\end{array}
$$

By Lemma \ref{31}, $B^{(0)}(B)$ satisfies (2.1) and thus
\begin{eqnarray*}
0&\leq &\gamma(B^{(0)}(B))= \sum_{r  \ge 2}(\frac{1}{r}-r)
n^0_{1,r}+24\\
&\leq& \sum_{r=2,3,4}(\frac{1}{r}-r)
n^0_{1,r}-\frac{24}{5}\sigma_5+24\\
&=&\frac{25}{12}+P_{-3}-\frac{13}{12}P_{-4}-\frac{21}{20}\sigma_5.
\end{eqnarray*}
It follows that $$P_{-4}+\sigma_5 \leq  P_{-3}+1. \eqno(3.1)$$

We need a more refined inequality, due to the fact that $B^{(5)}$
satisfies (2.1) again by Lemma \ref{31}. Because
$\gamma(B^{(5)}(B))=\gamma(B^{(0)}(B))-\frac{19}{30}\epsilon_5$, one
gets
$$ 0\leq \frac{25}{12}+P_{-3}-\frac{13}{12}P_{-4}
-\frac{21}{20}\sigma_5-\frac{19}{30}\epsilon_5.
\eqno(3.2)$$

On the other hand, by $n^0_{1,4} \ge 0$, we have
$$ P_{-4} \ge 2P_{-3}-1.$$
Thus we conclude that
$(P_{-3},P_{-4})=(0,0),(0,1),(1,1),(1,2),(2,3)$.
\end{setup}

Here is our complete classification:

\begin{thm}\label{list} Any geometric basket with $P_{-2}=0$ is
among the following list:

\centerline{Table A}

 {\tiny
$$ \begin{array}{lccccccc}
B & -K^3 & P_{-3} & P_{-4} & P_{-5} & P_{-6} & P_{-7} &P_{-8} \\
\hline\\
No.1.\ \{2 \times (1,2), 3 \times (2,5),(1,3),(1,4)\} & 1/60 & 0 & 0 & 1 & 1 & 1 &2\\
No.2.\ \{5 \times (1,2), 2 \times (1,3),(2,7),(1,4)\} & 1/84 & 0 & 1 & 0 & 1 & 1&2 \\
No.3.\ \{5 \times (1,2), 2 \times (1,3),(3,11)\} & 1/66 & 0 & 1 & 0 & 1 & 1 &2\\
No.4.\ \{5 \times (1,2), (1,3),(3,10),(1,4)\} & 1/60 & 0 & 1 & 0 & 1 & 1 &2\\
No.5.\ \{5 \times (1,2),  (1,3),2 \times (2,7)\} & 1/42 & 0 & 1 & 0 & 1 & 2&3 \\
No.6.\ \{4 \times (1,2),(2,5), 2 \times (1,3),2 \times (1,4)\} & 1/30 & 0 & 1 & 1 & 2 & 2 &4\\
No.7.\ \{3 \times (1,2),(2,5), 5 \times (1,3)\} & 1/30 & 1 & 1 & 1 & 3 & 3 &4\\
No.8.\ \{2 \times (1,2),(3,7), 5 \times (1,3)\} & 1/21 & 1 & 1 & 1 & 3 & 4 &5\\
No.9.\ \{(1,2),(4,9), 5 \times (1,3)\} & 1/18 & 1 & 1 & 1 & 3 & 4 &5\\
No.10.\ \{3 \times (1,2),(3,8), 4 \times (1,3)\} & 1/24 & 1 & 1 & 1 & 3 & 3&5 \\
No.11.\ \{3 \times (1,2),(4,11), 3 \times (1,3)\} & 1/22 & 1 & 1 & 1 & 3 & 3 &5\\
No.12.\ \{3 \times (1,2),(5,14), 2 \times (1,3)\} & 1/21 & 1 & 1 & 1 & 3 & 3 &5\\
No.13.\ \{2 \times (1,2),2 \times (2,5),4 \times (1,3)\} & 1/15 & 1 & 1 & 2 & 4 & 5 &7\\
No.14.\ \{ (1,2),(3,7),  (2,5),4 \times (1,3)\} & 17/210 & 1 & 1 & 2 & 4 & 6 &8\\
No.15.\ \{2 \times (1,2), (2,5),(3,8),3 \times (1,3)\} & 3/40 & 1 & 1 & 2 & 4 & 5&8 \\
No.16.\ \{2 \times (1,2),(5,13),3 \times (1,3)\} & 1/13 & 1 & 1 & 2 & 4 & 5 &8\\
No.17.\ \{ (1,2),3 \times (2,5),3 \times (1,3)\} & 1/10 & 1 & 1 & 3 & 5 & 7 &10\\
No.18.\ \{4 \times (1,2), 5 \times (1,3),(1,4)\} & 1/12 & 1 & 2 & 2 & 5 & 6 &9\\
No.19.\ \{4 \times (1,2), 4 \times (1,3),(2,7)\} & 2/21 & 1 & 2 & 2 & 5 & 7 &10\\
No.20.\ \{4 \times (1,2), 3 \times (1,3),(3,10)\} & 1/10 & 1 & 2 & 2 & 5 & 7 &10\\
No.21.\ \{3 \times (1,2),(2,5), 4\times (1,3),(1,4)\} & 7/60 & 1 & 2 & 3 & 6 & 8 &12\\
No.22.\ \{3 \times (1,2), 7 \times (1,3)\} & 1/6 & 2 & 3 & 4 & 9 & 12 &17 \\
No.23.\ \{2 \times (1,2),(2,5), 6 \times (1,3)\} & 1/5 & 2 & 3 & 5 & 10 & 14 &20\\
\end{array} $$}
\end{thm}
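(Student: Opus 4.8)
The plan is to carry out a finite case analysis organized by the value of $(P_{-3},P_{-4})$. Since the basket is geometric with $P_{-2}=0$, the constraint $P_{-2}\ge P_{-1}\ge 0$ forces $P_{-1}=0$, so we are exactly in the situation of the preceding discussion, where it was shown that the only admissible pairs are
$$(P_{-3},P_{-4})\in\{(0,0),(0,1),(1,1),(1,2),(2,3)\}.$$
For each such pair I would read off the initial basket $B^{(0)}(B)$ from the displayed formulas for $n^0_{1,2},n^0_{1,3},n^0_{1,4}$. This pins down the low-index part of $B^{(0)}$ completely and leaves only the tail $\sigma_5=\sum_{r\ge 5}n^0_{1,r}$ and its distribution among indices $r\ge 5$ to be determined.

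First I would bound the tail. Inequality $(3.2)$, namely $\gamma(B^{(5)}(B))\ge 0$, controls $\sigma_5$ from above because each $(1,r)$ with $r\ge 5$ contributes the large negative quantity $\frac{1}{r}-r$ to $\gamma$; combined with $n^0_{1,4}\ge 0$ and $(3.1)$, this leaves only finitely many admissible tails in each case (in practice forcing $\sigma_5$ to be very small, indeed $0$ for the surviving baskets). Each of the five cases is thereby reduced to finitely many completely explicit candidates for $B^{(0)}$.

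Next, for each admissible $B^{(0)}$ I would reconstruct the actual basket $B$. Since $B$ is obtained from $B^{(0)}$ along the canonical sequence $B^{(0)}\succeq B^{(5)}\succeq\cdots\succeq B$, and since $\gamma$ is strictly monotone under packing by Lemma \ref{31}, every intermediate basket must remain $\gamma$-nonnegative; this forces all packings to produce pairs of small denominator, so only combinations of neighboring $(1,2),(1,3),(1,4)$ into Farey-adjacent pairs such as $(2,5),(2,7),(3,7),(3,8),(3,10),(3,11),(4,9),(4,11),(5,13),(5,14)$ can occur. I would enumerate these packings directly, using the identities for $\epsilon_5,\epsilon_6,\epsilon_7,\epsilon_8$ (with the relation $\epsilon_6=0$) to record the resulting $P_{-5},\dots,P_{-8}$, and the formula $-K^3({\bf B})=2\tp_{-1}+\sigma(B)-\sigma'(B)-6$ to record the volume. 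Finally I would discard every candidate violating $(2.1)$, $(2.2)$, or $(2.3)$, which should leave precisely the twenty-three entries of Table A.

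The main obstacle is controlling the size of the repacking step and proving its completeness: a priori a given $B^{(0)}$ admits many packings, and one must argue that coprimality (prime packings only combine Farey-adjacent fractions) together with the $\gamma$-bound $(2.1)$ cuts these down to the short explicit list, with nothing overlooked. The bookkeeping is heaviest for $(P_{-3},P_{-4})=(1,1)$, which produces the largest block of baskets (Nos.\ 7--17) and the widest range of denominators; verifying completeness there, and using $(2.2)$ (i.e.\ $-K^3>0$) and the superadditivity $(2.3)$ to eliminate the spurious packings, is where the real care is required.
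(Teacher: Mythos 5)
Your proposal follows essentially the same route as the paper: the authors also reduce to $P_{-1}=0$, use $n^0_{1,4}\ge 0$ together with the $\gamma$-inequalities $(3.1)$ and $(3.2)$ to pin down $(P_{-3},P_{-4})\in\{(0,0),(0,1),(1,1),(1,2),(2,3)\}$ and to force $\sigma_5$ and $\epsilon_5$ to be small, then determine $B^{(5)}(B)$ in each case and enumerate all baskets it dominates, discarding those violating $(2.1)$ or $-K^3>0$ (this is the content of Propositions \ref{00} and \ref{01}). The plan, including the identification of the $(P_{-3},P_{-4})=(1,1)$ block and the $(0,0)$ case as the places where the packing enumeration requires the most care, matches the paper's argument.
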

\begin{proof}
This theorem follows from Propositions \ref{00}, \ref{01}.
\end{proof}

\begin{prop}\label{00} If
$(P_{-3},P_{-4})=(0,0)$, then $B$ is of type No.1 in Table A.
\end{prop}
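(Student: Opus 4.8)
The plan is to determine $B$ by working up its canonical sequence from the explicit data for $B^{(0)}$ and $B^{(5)}$ already recorded (for $P_{-1}=P_{-2}=0$) and then to show that the geometric inequalities $(2.1)$ and $(2.2)$ single out No.1. First I would set $(P_{-3},P_{-4})=(0,0)$ in the displayed formulas to get $n^0_{1,2}=5$, $n^0_{1,3}=4$, $n^0_{1,4}=1-\sigma_5$, so that $n^0_{1,4}\ge 0$ forces $\sigma_5\in\{0,1\}$ (here $\sigma_5=\sum_{r\ge 5}n^0_{1,r}$). Applying $(2.1)$ to $B^{(5)}$ via Lemma \ref{31} (equivalently, using $(3.2)$) I would reduce to a single linear bound of the shape $38P_{-5}+25\sigma_5\le 49$, which leaves only $(\sigma_5,P_{-5})=(0,0),(0,1),(1,0)$. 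In each of these $B^{(5)}$ is completely pinned down by the formulas for $n^{5}_{1,2},n^{5}_{2,5},n^{5}_{1,3},n^{5}_{1,4}$.

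I would then treat the three cases, using throughout that a prime packing of $(b_1,r_1),(b_2,r_2)$ lowers $\sigma'$ by $\tfrac{1}{r_1r_2(r_1+r_2)}$ and $\gamma$ by $\tfrac{1}{r_1}+\tfrac1{r_2}-\tfrac1{r_1+r_2}$, and that only packings preserving $B^{(5)}(B)$ (hence the prescribed $P_{-1},\dots,P_{-5}$) are admissible. For $(\sigma_5,P_{-5})=(0,1)$ one finds $B^{(5)}=\{2\times(1,2),3\times(2,5),(1,3),(1,4)\}$, with $-K^3=\tfrac1{60}>0$ and $\gamma=\tfrac{11}{60}>0$; every admissible packing of this $B^{(5)}$ drops $\gamma$ by more than $\tfrac{11}{60}$, so by Lemma \ref{31} none is possible while keeping $(2.1)$, whence $B=B^{(5)}$ is exactly No.1. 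For $(\sigma_5,P_{-5})=(1,0)$, applying $(2.1)$ to $B^{(0)}$ bounds the single high-index point $(1,r_0)$ by $r_0\le 6$: the value $r_0=6$ already violates $(2.1)$ on $B^{(5)}$, while $r_0=5$ gives $-K^3(B^{(5)})=0$, so a nontrivial packing is forced, yet each admissible packing lowers $\gamma$ below its value $\tfrac25$, again contradicting $(2.1)$.

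The delicate case, which I expect to be the main obstacle, is $(\sigma_5,P_{-5})=(0,0)$, where $B^{(5)}=\{3\times(1,2),2\times(2,5),2\times(1,3),(1,4)\}$ has $\sigma'=\tfrac{241}{60}$, i.e.\ $-K^3(B^{(5)})=-\tfrac1{60}<0$. Thus $(2.2)$ forces $B$ to be a proper packing lowering $\sigma'$ by more than $\tfrac1{60}$, while $\gamma(B^{(5)})=\tfrac{49}{60}$. The admissible packings here are the three "forward" steps $(1,3)+(1,4)$, $(1,2)+(2,5)$, $(2,5)+(1,3)$ (and their higher descendants). The key points are that no single such step lowers $\sigma'$ by as much as $\tfrac1{60}$ (the largest step, $(1,2)+(2,5)$, gives only $\tfrac1{70}$), so at least two steps are needed, and that every combination crossing the $\tfrac1{60}$ threshold lowers $\gamma$ by more than $\tfrac{49}{60}$, forcing $\gamma(B)<0$. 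Since $\sigma(B)=10$ caps the number of points, and hence of packings, this is a finite and elementary verification that eliminates $(\sigma_5,P_{-5})=(0,0)$ and finishes the proof.

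The reason this last step is genuinely the hard part is that a naive slope estimate is inconclusive: the two cheap (low-$\gamma$-cost) packings $(1,3)+(1,4)$ and $(1,2)+(2,5)$ have "slopes" $37$ and $39$ below the critical value $49$, so a single continuous inequality comparing $\Delta\gamma$ with $49\,\Delta\sigma'$ does not close. One must instead exploit the discreteness—these cheap packings are available only in bounded supply and the remaining steps all have slope $\ge 49$—so the argument rests on the integrality of the packing steps rather than on one clean inequality.
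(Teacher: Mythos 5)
Your proposal is correct and follows essentially the same route as the paper: the same reduction via $(3.1)$/$(3.2)$ to the three cases $(\sigma_5,P_{-5})=(0,0),(0,1),(1,0)$, the same identification of $B^{(5)}$ in each, and the same elimination by $\gamma\ge 0$ and $-K^3>0$. Your threshold/slope bookkeeping in the hard case $(\sigma_5,P_{-5})=(0,0)$ is a tidy repackaging of the paper's explicit enumeration of the one- and two-step packings of $\{3\times(1,2),2\times(2,5),2\times(1,3),(1,4)\}$ (the borderline baskets with $\Delta\sigma'=\tfrac{1}{60}$ exactly, e.g.\ $\{2\times(1,2),(5,12),2\times(1,3),(1,4)\}$, are killed by $-K^3=0$, as your phrasing allows), and your diagnosis of why a single continuous slope inequality cannot close the argument is accurate.
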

\begin{proof}

Now $\sigma_5 \le 1$ and $\epsilon_5=2+P_{-5}-\sigma_5 \le 3$ by
$(3.2)$.

\begin{CLM}\label{c00} The situation $(\sigma_5, P_{-5})=(0,0)$ does not happen.
\end{CLM}
\begin{proof}[Proof of the claim]
We have $$B^{(5)}(B)=\{3 \times (1,2), 2 \times (2,5), 2 \times
(1,3), (1,4)\}.$$

If $B=B^{(5)}(B)$, then
$-K^3(B)=\sigma-\sigma'-6=-\frac{1}{60}<0$, a contradiction.
Thus $B\neq B^{(5)}(B)$. Assume that $B$ has totally $t$ pairs. Then
$t<8$ since $B^{(5)}\succ B$. From $B^{(5)}(B)$ we know $\sum_i
r_i=26$. Thus (2.1) becomes $\sum_{i=1}^t\frac{1}{r_i}\geq 2$.
Assume $r_1\leq r_2\leq \cdots \leq r_t$. If $t\leq 4$, then $r_t>2$
and $\sum \frac{1}{r_i}\leq \frac{3}{2}+\frac{1}{r_t}<2$. So (2.1)
fails. If $t=5$, we consider the value of $r_3$. Whenever $r_3=2$,
one has $r_4\geq 3$ and $26=6+r_4+r_5\leq 6+2r_5$ gives $r_5\geq
10$. Thus $\sum_i
\frac{1}{r_i}\leq\frac{3}{2}+\frac{1}{3}+\frac{1}{10}<2$. So (2.1)
fails. Whenever $r_3\geq 3$, then $r_4\geq 3$ and $r_5\geq 7$. So
again $\sum_i \frac{1}{r_i}\leq 1+\frac{2}{3}+\frac{1}{7}<2$, a
contradiction to (2.1). Therefore we have seen $t=6,7$, which means
that $B$ is exactly obtained by 1 or 2 prime packings from
$B^{(5)}(B)$.

When $t=7$, $B$ must be one of the following cases:
\begin{itemize}
\item[(I).] $\{2\times (1,2), (3,7), (2,5), 2\times (1,3),
(1,4)\}$; $-K^3=-\frac{1}{60}<0$ (contradiction);
\item[(II).] $\{3\times (1,2), (2,5), (3,8), (1,3), (1,4)\}$;
$-K^3=-\frac{1}{120}<0$ (contradiction);
\item[(III).] $\{3\times (1,2), 2\times (2,5), (1,3), (2,7)\}$;
$-K^3=-\frac{1}{210}<0$ (contradiction);
\end{itemize}

When $t=6$, $B$ is nothing but an extra prime packing from one of
I,II and III:

\begin{itemize}
\item[(I-1).] $\{(1,2), 2\times (3,7),  2\times (1,3),
(1,4)\}$; $\sum_i \frac{1}{r_i}<2$ (contradiction);

\item[(I-2).] $\{(1,2), (4,9),  (2,5), 2\times (1,3),
(1,4)\}$; $\sum_i \frac{1}{r_i}<2$ (contradiction);

\item[(I-3).] $\{2\times (1,2), (5,12),  2\times (1,3),
(1,4)\}$; $-K^3=0$ (contradiction);

\item[(I-4).] $\{2\times (1,2), (3,7), (3,8), (1,3),
(1,4)\}$; $\sum_i \frac{1}{r_i}<2$ (contradiction);

\item[(I-5).] $\{2\times (1,2), (3,7),  (2,5), (1,3),
(2,7)\}$; $\sum_i \frac{1}{r_i}<2$ (contradiction);

\item[(II-1).] $\{3\times (1,2), (5,13), (1,3), (1,4)\}$;
$-K^3=-\frac{1}{156}<0$ (contradiction);

\item[(II-2).] $\{3\times (1,2), (2,5), (4,11), (1,4)\}$;
$-K^3=-\frac{1}{220}<0$ (contradiction);

\item[(II-3).] $\{3\times (1,2), (2,5), (3,8), (2,7)\}$;
$\sum_i \frac{1}{r_i}<2$ (contradiction);

\item[(III-1).] $\{3\times (1,2), 2\times (2,5), (3,10)\}$;
$-K^3=0$ (contradiction);
\end{itemize}
\end{proof}

We go on proving Proposition \ref{00}.
\medskip

If $\sigma_5=0$ and $P_{-5}>0$. Because $2+P_{-5}=\epsilon_5\le 3$,
we see $P_{-5}=1$ and $B^{(5)}(B)=\{2 \times (1,2), 3 \times (2,5),
(1,3),(1,4)\}$. A computation shows that any non-trivial packing of
$B^{(5)}$ has $\gamma <0$. Hence $B=B^{(5)}(B)$. So $B$ corresponds to
case No.1 in Table A.
\medskip

 If $\sigma_5=1$ and $P_{-5}=0$, then
we have $B^{(5)}(B)=\{4  \times (1,2), (2,5), 3 \times (1,3), (1,s)
\}$ with $s\geq 5$, $-K^3=\frac{1}{5}-\frac{1}{s}$ and
$\gamma=5-s+\frac{1}{5}+\frac{1}{s}$. When $s \ge 6$, we have
$\gamma<0$, a contradiction. Hence we must have $s=5$. Since
$-K^3(B^{(5)})=0$, so $B^{(5)}(B) \succ B$ is nontrivial. However,
any non-trivial packing of $B^{(5)}(B)$ has $\gamma <0$, which
still gives a contradiction. Thus  this case can not happen.
\medskip

Finally if $\sigma_5=1$ and $P_{-5}>0$, then we get a contradiction
from $(3.2)$.

We have proved Proposition \ref{00}.
\end{proof}

\begin{prop}\label{01} (1)
If $(P_{-3},P_{-4})=(0,1)$, then $B$ is of type No.2-No.6 in Table
A;

(2) If $(P_{-3},P_{-4})=(1,1)$, then $B$ is of type No.7-No.17 in
Table A;

(3) If  $(P_{-3},P_{-4})=(1,2)$, then $B$ is of type No.18-No.21 in
Table A;

(4) If $(P_{-3},P_{-4})=(2,3)$, then $B$ is of type No. 22, No. 23
in Table A.
\end{prop}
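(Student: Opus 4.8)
The plan is to run, for each of the four cases, exactly the reduction already carried out in Proposition~\ref{00}: bound the free data $(\sigma_5,P_{-5})$, read off the finitely many possibilities for $B^{(5)}(B)$ from the explicit formulas of the previous section, and then enumerate the packings of each such $B^{(5)}(B)$, retaining only those baskets that satisfy the geometric inequalities (2.1) and (2.2). The four cases differ only in the numerical input $(P_{-3},P_{-4})$, so the whole argument is a case-by-case instance of one mechanism.

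First I would pin down $\sigma_5$. Combining inequality (3.1), $P_{-4}+\sigma_5\le P_{-3}+1$, with the non-negativity of $n_{1,4}^0=1-2P_{-3}+P_{-4}-\sigma_5$, one finds $\sigma_5=0$ in all four cases, so that $B^{(0)}(B)$ involves only the denominators $2,3,4$ and the formulas of the previous section collapse to constants. Next, $\epsilon_5=2-2P_{-4}+P_{-5}-\sigma_5$ must be a non-negative integer and is bounded above by (3.2); this confines $P_{-5}$ to $\{0,1\}$ in case (1), $\{0,1,2,3\}$ in case (2), $\{2,3\}$ in case (3) and $\{4,5\}$ in case (4). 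For each admissible value the basket
$$B^{(5)}(B)=\{n_{1,2}^5\times(1,2),\ \epsilon_5\times(2,5),\ n_{1,3}^5\times(1,3),\ n_{1,4}^5\times(1,4)\}$$
is completely determined by the displayed expressions for $n_{1,2}^5,n_{1,3}^5,n_{1,4}^5$.

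With $B^{(5)}(B)$ fixed, the genuine basket $B$ is one of the finitely many packings with $B^{(5)}(B)\succeq B$, obtained by prime packings that merge pairs into denominators $\ge 6$ (a packing producing $(2,5)$ is a level-$5$ packing, already counted by $\epsilon_5$). I would grow this finite packing tree and test each node against (2.1) and (2.2). Lemma~\ref{31} makes pruning cheap: $\gamma$ strictly decreases along every packing, so a whole branch can be abandoned the moment $\gamma<0$; and since $\sigma$ is packing-invariant while $\sigma'$ decreases, $-K^3=\sigma-\sigma'-6$ only increases under packing, so (2.2) is immediate to monitor. The survivors are exactly the entries of Table~A claimed in each case, and for each survivor I would finally read off $P_{-6},P_{-7},P_{-8}$ from Reid's formula, using (2.3) as a consistency check.

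The bulk of the work, and the only real obstacle, is the bookkeeping in case (2), $(P_{-3},P_{-4})=(1,1)$. There $P_{-5}$ ranges over $1,2,3$ with base baskets No.7, No.13 and No.17, generating the eleven entries No.7–No.17; completeness forces a careful enumeration of three separate packing trees so that no admissible basket is overlooked and every rejected branch is justified by a sign change in $\gamma$ or in $-K^3$. A related subtlety is the degenerate value $P_{-5}=0$: in case (1) the pairs $(1,4)$ present in $B^{(5)}$ admit genuine higher packings and produce No.2–No.5, whereas in case (2) the base $\{4\times(1,2),6\times(1,3)\}$ admits only the level-$5$ packing $(1,2)+(1,3)=(2,5)$, so no higher packing exists and, its volume being $-K^3=0$, this value contributes nothing. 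Handling these $-K^3=0$ bases correctly is the delicate point of the argument.
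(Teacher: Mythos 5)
Your proposal is correct and follows essentially the same route as the paper: in each case you force $\sigma_5=0$ via (3.1) (resp.\ $n^0_{1,4}\ge 0$ in case (2)), bound $\epsilon_5=P_{-5}-2P_{-4}+2$ by (3.2), read off the resulting $B^{(5)}(B)$ for each admissible $\epsilon_5$, and enumerate its packings against (2.1) and (2.2), exactly as in the paper's proof (including the rejection of the $-K^3=0$ unpackable bases such as $\{4\times(1,2),6\times(1,3)\}$). The stated ranges for $P_{-5}$ and the identification of the base baskets No.7, No.13, No.17 all check out against the paper's case division.
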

\begin{proof} (1)
By $(3.1)$, we have $\sigma_5=0$, hence $B^{(0)}(B)=\{5 \times
(1,2), 3 \times (1,3), 2 \times (1,4)\}$. By $(3.2)$, we have
$P_{-5}=\epsilon_5 \le 1$.

If $P_{-5}=0$, then we can easily compute all possible formal
baskets $B$ with $B^{(5)}(B)=\{5 \times (1,2), 3 \times (1,3), 2
\times (1,4)\}$, $\gamma>0$ and $-K^3(B)>0$. In fact, by classifying all baskets with $B^{(5)}(B)$ as above, and
verifying inequalities $(2.1),(2.2)$,
the reader should have no difficulty to see that  $B$ has 4
types which correspond to No.2 through No.5 in Table A.

If $P_{-5}=1$, then $B^{(5)}(B)=\{4 \times (1,2),(2,5), 2 \times (1,3),
2 \times (1,4)\}$. Because any basket dominated by $B^{(5)}(B)$ has
$\gamma <0$, we see $B=B^{(5)}(B)$ which corresponds to No. 6 in Table
A.
\medskip

(2) In this case, $0\leq n^0_{1,4}=-\sigma_5 $ gives $\sigma_5=0$.
By $(3.2)$, we have $\epsilon_5 \le 3$.

If $\epsilon_5=0$, then we get $B=B^{(5)}(B)=\{4 \times (1,2), 6 \times
(1,3)\}$ with $-K^3(B)=0$, a contradiction.

If $\epsilon_5=1$, then we get $B^{(5)}(B)=\{3\times (1,2), (2,5),
5\times (1,3)\}$. By computation, we see that $B$ corresponds to
 No. 7 through No. 12 in Table A.

If $\epsilon_5=2$, then we  get $B^{(5)}(B)=\{2\times (1,2), 2\times
(2,5), 4\times (1,3)\}$. We see that  $B$ corresponds to No. 13
through No. 16 in Table A.

If $\epsilon_5=3$, then we  get $B^{(5)}(B)=\{(1,2), 3\times (2,5),
3\times (1,3)\}$. We see that $B$ has only one possibility, which is $B^{(5)}(B)$ corresponding to No. 17
in Table A.
\medskip

(3) By $(3.1)$, we must have $\sigma_5=0$. Moreover, by $(3.2)$, we
have $\epsilon_5 \le 1$.

If $\epsilon_5=0$, then we get $B^{(5)}(B)=\{4\times (1,2), 5\times
(1,3), (1,4)\}$ and $B$ corresponds to
 No. 18 through No. 20 in Table A.

If $\epsilon_5=1$, then we  get $B^{(5)}(B)=\{3\times (1,2), (2,5),
4\times (1,3), (1,4)\}$ and $B=B^{(5)}(B)$ corresponds to No. 21 in Table A.
\medskip

(4) By $(3.1)$, we must have $\sigma_5=0$. Moreover, by $(3.2)$, we
have $\epsilon_5 \le 1$.

If $\epsilon_5=0$, then we  get $B^{(5)}(B)=\{3\times (1,2), 7\times
(1,3)\}$ and $B=B^{(5)}(B)$ corresponds to No.22 in Table A.

If $\epsilon_5=1$, then we  get $B^{(5)}(B)=\{2\times (1,2), (2,5),
6\times (1,3)\}$ and $B=B^{(5)}(B)$ corresponds to No.23 in Table A.
\end{proof}

Now we are able to prove the following

\begin{thm}\label{6} Let $X$ be a terminal weak $\bQ$-Fano 3-fold. Then
$P_{-6} >0$.
\end{thm}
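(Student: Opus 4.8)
The plan is to reduce everything to the classification already obtained in Theorem \ref{list} by separating two cases according to the value of $P_{-2}$. First I would record the elementary observation that the superadditivity inequality (2.3) forces $P_{-2}>0$ whenever $P_{-1}>0$: taking $m=n=1$ gives $P_{-2}\geq 2P_{-1}-1\geq 1$. Hence the vanishing $P_{-2}=0$ automatically entails $P_{-1}=0$, which is exactly the hypothesis under which Theorem \ref{list} is stated. This little reduction is what lets the two cases below cover all possibilities.

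In the case $P_{-2}>0$ the conclusion is immediate from (2.3) alone, with no appeal to the classification. Indeed, applying (2.3) with $m=n=2$ yields
$$P_{-4}\geq 2P_{-2}-1\geq 1>0,$$
so that $P_{-4}>0$ as well. Applying (2.3) once more, now with $m=4$ and $n=2$ (both anti-plurigenera being positive), gives
$$P_{-6}\geq P_{-4}+P_{-2}-1\geq 1+1-1=1>0,$$
which is the desired inequality.

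In the remaining case $P_{-1}=P_{-2}=0$, I would invoke Theorem \ref{list}: since $X$ is a terminal weak $\bQ$-Fano 3-fold, its formal basket $(B_X,P_{-1}(X))$ is geometric and therefore appears in Table A. The whole point is then simply to inspect the $P_{-6}$ column of that table: every one of the twenty-three entries No.1 through No.23 satisfies $P_{-6}\geq 1$. Hence $P_{-6}>0$ in this case too, and the two cases together exhaust all possibilities.

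The genuine difficulty of this statement is not in the short argument above but in the input it relies on: the complete list of Table A. That classification is where inequalities (2.1) and (2.2), the nonnegativity of the $n^0_{1,r}$, and the finiteness in Proposition \ref{finite} do all the real work, and it is already established (Propositions \ref{00} and \ref{01}). Granting it, the proof of $P_{-6}>0$ is essentially a case check, and the only thing to be careful about is that $P_{-2}=0$ really does place us in the setting of Theorem \ref{list}, which is precisely what the preliminary observation $P_{-1}>0\Rightarrow P_{-2}>0$ guarantees.
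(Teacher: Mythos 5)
Your proof is correct. The reduction via (2.3) is exactly right: $P_{-1}>0$ forces $P_{-2}>0$, and $P_{-2}>0$ gives $P_{-4}\geq 2P_{-2}-1\geq 1$ and then $P_{-6}\geq P_{-4}+P_{-2}-1\geq 1$; and in the remaining case $P_{-1}=P_{-2}=0$ the basket is geometric, so Theorem \ref{list} applies and every entry of Table A has $P_{-6}\geq 1$. Your preliminary observation that $P_{-2}=0$ implies $P_{-1}=0$ is genuinely needed, since the classification behind Table A is carried out under the hypothesis $P_{-1}=P_{-2}=0$ even though the theorem is stated with $P_{-2}=0$ only; it is good that you made this explicit where the paper leaves it tacit.

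The route differs from the paper's in one respect worth noting. The paper also begins by using (2.3) to conclude that $P_{-6}=0$ forces $P_{-1}=P_{-2}=P_{-3}=0$, but instead of citing the full Table A it then exploits the identity $\epsilon_6=3P_{-1}+P_{-2}-P_{-3}-P_{-4}-P_{-5}+P_{-6}-\epsilon=0$: with the vanishing already established, nonnegativity of each term yields $P_{-4}=P_{-5}=\epsilon=0$, which pins down $B^{(5)}(B)=\{3\times(1,2),\,2\times(2,5),\,2\times(1,3),\,(1,4)\}$, and this single configuration is excluded by Claim \ref{c00} inside the proof of Proposition \ref{00}. So the paper needs only one subcase of the classification machinery, whereas you invoke the complete classification of Theorem \ref{list}; since that theorem is fully established before this point, your argument is equally valid, and arguably cleaner to read, at the cost of resting on a larger body of prior case analysis.
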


\begin{proof} Set $B:=B_X$.
If $P_{-6}=0$, then $P_{-1}=P_{-2}=P_{-3}=0$. By $\epsilon_6=0$, we
get $P_{-4}=P_{-5}=\epsilon=0$. Thus $B^{(5)}(B)=\{3 \times (1,2), 2
\times (2,5), 2 \times (1,3), (1,4)\}$. By Claim 1 in the proof of Proposition \ref{00}, we know that
such a basket $B$ does not exist. Thus $P_{-6}(X)>0$.
\end{proof}

\begin{prop}\label{>0} Let $X$ be a terminal weak $\bQ$-Fano 3-fold.
Then $P_{-4}>0$ unless $B_X=\{2 \times (1,2), 3 \times (2,5),
(1,3),(1,4)\}$.
\end{prop}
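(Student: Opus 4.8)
The plan is to derive this proposition as an immediate consequence of the classification already established in Theorem~\ref{list}. The whole point is that the hypothesis $P_{-4}=0$ is strong enough to force $P_{-2}=0$, after which Table~A leaves essentially no room for the basket $B_X$.

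First I would establish the implication $P_{-4}=0 \Rightarrow P_{-2}=0$. Suppose to the contrary that $P_{-2}\geq 1$. Applying the geometric inequality $(2.3)$ with $m=n=2$ yields $P_{-4}\geq 2P_{-2}-1\geq 1$, contradicting $P_{-4}=0$. Hence $P_{-2}=0$. (Equivalently, one could note that multiplication by a nonzero section of $-2K_X$ injects $H^0(-2K_X)$ into $H^0(-4K_X)$, so $P_{-4}\geq P_{-2}$; but $(2.3)$ is the tool already set up, so I would lead with it.)

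Second, since $B_X$ is geometric by definition and now satisfies $P_{-2}=0$, Theorem~\ref{list} applies and forces $B_X$ to be one of the baskets No.1 through No.23 recorded in Table~A. Finally I would simply read off the $P_{-4}$ column of that table: every entry from No.2 to No.23 has $P_{-4}\geq 1$, so the standing assumption $P_{-4}=0$ excludes all of them. The unique survivor is No.1, namely $B_X=\{2 \times (1,2), 3 \times (2,5), (1,3),(1,4)\}$, which is exactly the claimed exceptional basket. This completes the argument.

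I do not expect a genuine obstacle here, since the substance lives entirely in Theorem~\ref{list} and this statement is really a corollary of it. The only step deserving a moment's care is verifying the applicability of $(2.3)$: its hypotheses demand positivity of both factors $P_{-m}$ and $P_{-n}$, which is precisely the condition $P_{-2}>0$ that we assume toward a contradiction, so the reduction to the $P_{-2}=0$ regime is clean.
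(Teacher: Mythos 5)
Your proof is correct and follows essentially the same route as the paper: reduce to $P_{-2}=0$ via inequality (2.3) and then invoke the classification of such baskets. The paper is marginally more economical --- it uses $n^0_{1,4}=1-2P_{-3}+P_{-4}-\sigma_5\geq 0$ to pin down $(P_{-3},P_{-4})=(0,0)$ first and then cites only Proposition~\ref{00} rather than all of Table~A --- but this is the same argument in substance.
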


\begin{proof}
Assume $P_{-4}=0$. Then clearly $P_{-1}=P_{-2}=0$. Recall that
$n^0_{1,4}=1-2P_{-3}+P_{-4}-\sigma_5$. It follows that $P_{-3}=0$.
By Proposition \ref{00},  we see $B=\{2 \times (1,2), 3 \times
(2,5), (1,3),(1,4)\}$.
\end{proof}

\begin{prop} \label{p-8} Let $X$ be a terminal weak $\bQ$-Fano 3-fold.
If $P_{-2}>0$, then $P_{-2k} \geq 2$ for all $k \ge 4$.
\end{prop}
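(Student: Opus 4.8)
The plan is to reduce the statement to the single inequality $P_{-8}\geq 2$ and then propagate upward by $(2.3)$. The case $P_{-2}\geq 2$ is immediate: $(2.3)$ with $m=n=2$ gives $P_{-4}\geq 2P_{-2}-1\geq 3$, and since $P_{-2}>0$ the relation $P_{-2k}\geq P_{-2(k-1)}+P_{-2}-1$ propagates $P_{-2k}\geq 2$ to all $k\geq 2$. So I may assume $P_{-2}=1$ throughout.

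When $P_{-2}=1$, inequality $(2.3)$ makes the even anti-plurigenera non-decreasing: applying it to the indices $m$ and $2$ gives $P_{-(m+2)}\geq P_{-m}+P_{-2}-1=P_{-m}$ whenever $P_{-m}>0$. Running this from $P_{-2}=1$ shows $1\le P_{-4}\le P_{-6}\le P_{-8}$, so $P_{-8}=1$ would force $P_{-2}=P_{-4}=P_{-6}=P_{-8}=1$; conversely, once $P_{-8}\geq 2$ is known the same propagation gives $P_{-2k}\geq P_{-8}\geq 2$ for every $k\geq 4$, which is the assertion. Hence it suffices to rule out the configuration $P_{-2}=P_{-4}=P_{-6}=P_{-8}=1$.

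To do this I would feed these values into the basket machinery of Section 2. From $\sigma(B)=10-5P_{-1}+P_{-2}=11-5P_{-1}\geq 0$ one gets $P_{-1}\in\{0,1,2\}$, and $P_{-1}=2$ forces $\sigma(B)=1$, hence a single-pair basket $\{(1,r)\}$, for which $-K^3({\bf B})=2P_{-1}+\sigma(B)-\sigma'(B)-6=-1-\tfrac1r<0$ contradicts $(2.2)$. For $P_{-1}\in\{0,1\}$ I would substitute $P_{-2}=P_{-4}=P_{-6}=P_{-8}=1$ into the explicit formulas for $\mB^{(0)}$, $\mB^{(5)}$ and for $\epsilon_5,\epsilon_6,\epsilon_7,\epsilon_8$ established in Section 2. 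Non-negativity of the resulting multiplicities, the identity $\epsilon_6=0$, and the monotonicity coming from $(2.3)$ pin $P_{-3}$, $P_{-5}$ and the tail $\sigma_5$ down to a short list, after which each candidate basket is eliminated against $\gamma(B)\geq 0$ (that is $(2.1)$, which by Lemma \ref{31} is inherited under packing) and against $-K^3({\bf B})>0$ (that is $(2.2)$), exactly as in the proofs of Propositions \ref{00} and \ref{01}.

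The main obstacle is the sub-case $P_{-1}=0$, where $\sigma(B)=11$ is large: here $\mB^{(5)}$ still admits several prime packings before reaching the honest basket $B$, and $P_{-7},P_{-8}$ are \emph{not} determined by $\mB^{(5)}$ alone, so the easy squeeze on $P_{-7}$ available when $P_{-1}=1$ (namely $P_{-8}\geq P_{-7}\geq P_{-6}$) is lost. I expect to handle this by invoking the expressions for $\epsilon_7$ and $\epsilon_8$: their non-negativity together with $P_{-8}=1$ bounds the number of admissible packings of $\mB^{(5)}$, reducing the check to finitely many explicit baskets, each of which I would discard by a direct computation of $\sigma'$ or of $\gamma$. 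Combining the two sub-cases gives $P_{-8}\geq 2$, and the propagation of the second paragraph then finishes all $k\geq 4$.
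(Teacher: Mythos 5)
Your opening reduction is correct and is exactly the paper's: dispose of $P_{-2}\ge 2$ by $(2.3)$, and for $P_{-2}=1$ use $P_{-2}\le P_{-4}\le P_{-6}\le P_{-8}$ to reduce everything to excluding $P_{-2}=P_{-4}=P_{-6}=P_{-8}=1$, then split on $P_{-1}\in\{0,1\}$ (your elimination of $P_{-1}=2$ is fine, and is also forced by $(2.3)$). Your toolkit is also the right one; in fact the $\epsilon_8\ge 0$ trick you propose is precisely what the paper uses, except that it is needed in the case $P_{-1}=1$ (tail $(1,s)$ with $s\ge 8$, where $\epsilon_8\ge 0$ reads $P_{-8}\ge P_{-7}+1\ge 2$), not in the case $P_{-1}=0$, which the paper closes using only $n^0_{1,4}\ge 0$, $n^5_{1,3}\ge 0$, the identity $\epsilon_6=0$ and $(2.2)$.

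The genuine gap is in your final elimination step. You assert that once $P_{-3}$, $P_{-5}$, $\sigma_5$ are pinned down, "each candidate basket is eliminated against $\gamma(B)\ge 0$ and against $-K^3({\bf B})>0$." That is not sufficient, because the candidates are whole families of baskets $B$ dominated by a computed $B^{(0)}$ or $B^{(5)}$, and some members of these families satisfy both $(2.1)$ and $(2.2)$. Concretely, in the branch $P_{-1}=0$, $P_{-3}=P_{-5}=0$, $P_{-4}=1$, $\sigma_5=1$ one is led to $B^{(0)}=\{9\times(1,2),(1,3),(1,5)\}$, which has $\sigma'=151/30>5$ and so violates $(2.2)$; but its packing $\{7\times(1,2),(3,7),(1,5)\}$ has $-K^3=1/70>0$ and $\gamma>0$, so it passes both tests. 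It is excluded only because it has $\epsilon_5=1$, hence $P_{-5}=1$ and in fact $P_{-6}=2$, i.e.\ it does not realize the anti-plurigenera of the branch under consideration. What your plan is missing is the step the paper supplies here: when $\epsilon_5=0$ and the ratios occurring in $B^{(5)}(B)$ admit no further prime packings in the canonical sequence, one concludes $B=B^{(5)}(B)$ exactly, and only then applies $(2.2)$ to that single basket (elsewhere the paper instead bounds $\sigma'(B)$ from below by $\sigma'$ of the \emph{minimal} baskets dominated by $B^{(5)}$). Without this bookkeeping of which packings are compatible with the assumed anti-plurigenera, the finite check you describe does not close.
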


\begin{proof}
%
%
%
If $P_{-2}\geq 2$, then there is nothing to prove. Thus it remains
to consider the case $P_{-2}=1$. (Actually we will prove that
$P_{-6} \geq 2$ except for a very special case.)
\medskip

\noindent {\bf Case 1.} $P_{-1}=0$.

Then $n^0_{1,4}=-2P_{-3}+P_{-4}-\sigma_5 \ge 0$. Note that
$P_{-4}\geq 2$ whenever $P_{-3}
>0$. We only need to consider the case $P_{-3}=0$ and $P_{-4}=1$.
Since $n_{1,4}^0=1-\sigma_5$, we see $\sigma_5\leq 1$.

If $\sigma_5=0$, then $\epsilon=0$ and $\epsilon_5=1+P_{-5} \leq
n^0_{1,3}=1$. Thus $P_{-5}=0$. Now $\epsilon_6=0$ gives $$2\leq
P_{-2}+P_{-6}=P_{-3}+P_{-4}+P_{-5}+\epsilon =1,$$ a contradiction.
Thus $\sigma_5=1$.

Now if $P_{-5}>0$, then $ P_{-3}+P_{-4}+P_{-5}+\epsilon \geq 3$ and
thus $\epsilon_6=0$ gives $P_{-6} \ge 2$. Clearly $P_{-8}\geq 2$.

If $P_{-5}=0$, then $\epsilon_5=0$ and $B^{(5)}(B)=\{9 \times (1,2),
(1,3), (1,s)\}$ with $s \geq 5$. By our definition,
$B^{(n)}(B)=B^{(5)}(B)$ for all $n \ge 5$. Also notice that $B=B^{(n)}(B)$ for $n$ sufficiently large.
We thus have $B=B^{(5)}(B)$.

When $\epsilon=1$, then $n_{1,5}^0=1$ and $n_{1,r}^0=0$ for all
$r\geq 6$, which means $s=5$. Now
$\sigma'(B)=\frac{9}{2}+\frac{1}{3}+\frac{1}{5}>5$ and (2.2) fails. Thus we have
$\epsilon\geq 2$. Hence $\epsilon_6=0$ implies $P_{-6}=\epsilon\geq 2$.
\medskip

\noindent {\bf Case 2.} $P_{-1}=1$.

We may assume $P_{-6}=1$. Then $P_{-2}=P_{-3}=P_{-4}=P_{-5}=1$.
Since $\epsilon_6=0$, one gets $\epsilon=2$ and therefore $\sigma_5 >0$. Note that
$\epsilon_5=2-\sigma_5 \ge 0$. We have $\sigma_5 \leq 2$.

If $\sigma_5=2$, then $n^0_{1,5}=2$. We have $B^{(5)}(B)=\{2 \times
(1,2), 2 \times (1,3), 2 \times (1,5)\}$. By the same reason as above,
$B=B^{(5)}(B)$.
Because $\sigma'(B)=1+\frac{2}{3}+\frac{2}{5}>2$, so (2.2) fails.

Thus $\sigma_5=1$, and then we have $B^{(5)}(B)=\{ (1,2), (2,5), (1,3),
(1,4),(1,s)\}$ with $s \ge 6$.
If $s \ge 8$, then $\epsilon_8 \ge 0$ gives
$$P_{-8}-P_{-7}=\epsilon_8+1  \ge 1.$$
Since $P_{-7} \ge P_{-6} \ge 1$, we have $P_{-8} \ge 2$.

We now assume that $s=6$, $7$. Considering all baskets with given
$B^{(5)}$, we may find that they dominate one of the following
minimal elements:

$B_1=\{(3,7), (2,7), (1,s)\}$;

$B_2=\{(1,2), (3,8), (1,4), (1,s)\}$.

Because $\sigma'(B)\geq \sigma'(B_i)\geq 2$ whenever $s=6,7$ and
$i=1,2$, inequality (2.2) fails for all $B$, which says that this
case does not happen.

We have actually proved $P_{-8}\geq 2$. Furthermore $P_{-6}\geq 2$
except when $P_{-1}=1$ and $\sigma_5=1$.

This completes the proof.
\end{proof}

Now we prove the following:
\begin{thm}\label{>2} Let $X$ be a terminal weak $\bQ$-Fano 3-fold.
Then $P_{-2k} \ge 2$ for all $k \geq 4$.
\end{thm}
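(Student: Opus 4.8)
The plan is to split according to whether $P_{-2}$ vanishes. If $P_{-2}>0$, then Proposition \ref{p-8} already gives $P_{-2k}\geq 2$ for all $k\geq 4$, so there is nothing further to prove. Hence the entire content lies in the case $P_{-2}=0$, where by Theorem \ref{list} the basket $B=B_X$ must be one of the $23$ types in Table A; in particular every such $X$ satisfies $P_{-6}\geq 1$ (cf.\ Theorem \ref{6}) and $P_{-8}\geq 2$, both readable from the last columns of Table A.

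The key reduction is a propagation step: I claim it suffices to prove $P_{-8},P_{-10},P_{-12}\geq 2$. Indeed, suppose $P_{-m}\geq 2$ for some $m\geq 1$. Since $P_{-6}\geq 1$, both $P_{-m}$ and $P_{-6}$ are positive, so inequality $(2.3)$ applies and yields
$$P_{-(m+6)}\geq P_{-m}+P_{-6}-1\geq 2.$$
Starting from the three seeds $P_{-8},P_{-10},P_{-12}\geq 2$ and repeatedly adding $6$, one reaches every even integer $\geq 8$: the residues of $8,10,12$ modulo $6$ are $2,4,0$, so $\{8,10,12\}+6\mathbb{Z}_{\geq 0}$ exhausts all even numbers $\geq 8$. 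This covers $P_{-2k}$ for every $k\geq 4$.

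It then remains to secure the three seeds for each basket in Table A. The value $P_{-8}\geq 2$ is immediate. For $P_{-12}$, whenever $P_{-4}\geq 1$ (which holds for No.2 through No.23) inequality $(2.3)$ gives $P_{-12}\geq P_{-8}+P_{-4}-1\geq 2$. For $P_{-10}$, whenever $P_{-4}\geq 1$ and $P_{-6}\geq 2$ (which holds for No.6 through No.23) inequality $(2.3)$ gives $P_{-10}\geq P_{-6}+P_{-4}-1\geq 2$. Thus $(2.3)$ disposes of all but the smallest baskets: No.1 for $P_{-12}$, and No.1 through No.5 for $P_{-10}$, precisely those with $P_{-4}=0$ or $P_{-6}=1$, where the crude packing inequalities stall at $1$.

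For these few exceptional baskets I would compute $P_{-10}$ and $P_{-12}$ directly from Reid's formula in \ref{RR}, using $P_{-1}=0$ and the explicit singularity data of each basket. The main (and only) obstacle is this finite residual computation: although the volumes are tiny (e.g.\ $-K^3=1/60,1/84,1/66,1/42$), for $n=10,12$ the term $2n+1$ dominates the correction $l(-n)$ and keeps the plurigenera comfortably above $1$ --- for instance one finds $P_{-10}=4$ and $P_{-12}=6$ for No.1. Once these explicit values confirm $P_{-10},P_{-12}\geq 2$ for No.1 through No.5, the propagation step above completes the proof.
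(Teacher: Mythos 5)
Your proposal is correct and follows the paper's own route: the case $P_{-2}>0$ is delegated to Proposition \ref{p-8}, and the case $P_{-2}=0$ is reduced to the finite list of Theorem \ref{list}. Your additional propagation step via inequality $(2.3)$ (reducing all $k\geq 4$ to the three seeds $P_{-8},P_{-10},P_{-12}$, with a short residual computation for baskets No.1--No.5) is simply a precise way of carrying out what the paper compresses into ``computing $P_{-2k}$ for each case in the list,'' and your sample values (e.g.\ $P_{-10}=4$, $P_{-12}=6$ for No.1) check out against Reid's formula.
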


\begin{proof}
When $P_{-2} >0$, then this follows from Proposition \ref{p-8}.

When $P_{-2}=0$, then it follows from Theorem \ref{list} and by
computing $P_{-2k}$ for each case in the list.
\end{proof}

\section{\bf The anti-volume}

By Riemann-Roch formula directly, we have
\begin{eqnarray*}
\frac{1}{2}(-K^3) &=& P_{-1}-3+l(2),\\
\frac{5}{2}(-K^3)&=& P_{-2}-5+l(3).
\end{eqnarray*}



\bigskip

%


\begin{setup}\label{-K^3}{\bf An inequality.} We have $B^{(0)}(B) \succ
B$ and so $(B^{(0)}(B),P_{-1})\succ (B,P_{-1})$. By our formulae in
Section 2, we get
$$\sigma'(B^{(0)})-K^3(B^{(0)})=\sigma'(B)-K^3(B)=2P_{-1}+\sigma(B)-6.$$

We have
\begin{eqnarray*}
\sigma'(B^{(0)}(B))&=& \frac{1}{2}n_{1,2}^0
+\frac{1}{3}n_{1,3}^0+\frac{1}{4}n_{1,4}^0+\sum_{\sigma_5}\frac{1}{r}n_{1,r}^0\\
&\leq & \frac{1}{2}n_{1,2}^0
+\frac{1}{3}n_{1,3}^0+\frac{1}{4}(n_{1,4}^0+\sigma_5)\\
&=& \frac{1}{12}(49-35P_{-1}+13P_{-2}-P_{-4}).
\end{eqnarray*}

Thus we get the following inequality by \ref{p2}(iv):
\begin{eqnarray*}
-K^3(B)&\geq& -K^3(B^{(0)}(B))= 2P_{-1}+\sigma(B)-6-\sigma'(B^{(0)}(B))\\
&\geq& \frac{1}{12} (-1-P_{-1}-P_{-2}+P_{-4}).\hskip3.5cm (4.1)
\end{eqnarray*}

In particular, we have $-K^3 \geq \frac{1}{12} $ whenever $P_{-4} >
P_{-2}+P_{-1}+1$.
\end{setup}

\begin{lem}\label{=} Assume
$P_{-4} = P_{-2}+P_{-1}+1$. Then:
\begin{itemize}
\item[(1).] $-K^3 \geq \frac{1}{20}$ when $\sigma_5
>0$;
\item[(2).] $-K^3 \geq \frac{1}{30}$ when $\epsilon_5 >0$.
\end{itemize}
\end{lem}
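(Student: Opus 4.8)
The plan is to revisit the derivation of inequality $(4.1)$ in \ref{-K^3} and sharpen it, exploiting the extra data provided by the hypotheses $\sigma_5>0$ and $\epsilon_5>0$ respectively. Under the equality $P_{-4}=P_{-2}+P_{-1}+1$, inequality $(4.1)$ degenerates to $-K^3(B)\geq -K^3(B^{(0)}(B))\geq 0$, so the task in both cases is to locate the slack in the estimate of $\sigma'(B^{(0)}(B))$ and convert it into a strictly positive lower bound.

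For part (1), I would tighten the single inequality used in \ref{-K^3}. There the sum $\sum_{r\geq 5}\frac{1}{r}n_{1,r}^0$ was bounded crudely by $\frac{1}{4}\sigma_5$; since every $r\geq 5$ satisfies $\frac{1}{r}\leq \frac{1}{5}$, the sharper bound $\sum_{r\geq 5}\frac{1}{r}n_{1,r}^0\leq \frac{1}{5}\sigma_5$ holds. Propagating this through the computation lowers the upper bound for $\sigma'(B^{(0)}(B))$ by exactly $\frac{1}{20}\sigma_5$, which by \ref{p2}(iv) yields $-K^3(B)\geq \frac{1}{12}(-1-P_{-1}-P_{-2}+P_{-4})+\frac{1}{20}\sigma_5$. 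Under the equality hypothesis the first term vanishes, so $-K^3(B)\geq \frac{1}{20}\sigma_5\geq \frac{1}{20}$ whenever $\sigma_5\geq 1$.

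For part (2), I would instead compare $B$ with $B^{(5)}(B)$ rather than with $B^{(0)}(B)$. The step $B^{(0)}(B)\succ B^{(5)}(B)$ is realized by $\epsilon_5$ prime packings, each of the form $(1,2),(1,3)\mapsto (2,5)$. Each such packing changes $\sigma'$ by $\frac{4}{5}-\frac{1}{2}-\frac{1}{3}=-\frac{1}{30}$, so $\sigma'(B^{(5)}(B))=\sigma'(B^{(0)}(B))-\frac{1}{30}\epsilon_5$ and hence $-K^3(B^{(5)}(B))=-K^3(B^{(0)}(B))+\frac{1}{30}\epsilon_5$. Since $(B^{(5)}(B),P_{-1})\succeq (B,P_{-1})$, property \ref{p2}(iv) gives $-K^3(B)\geq -K^3(B^{(5)}(B))$, and combining with $-K^3(B^{(0)}(B))\geq 0$ (the equality case of $(4.1)$) produces $-K^3(B)\geq \frac{1}{30}\epsilon_5\geq \frac{1}{30}$ whenever $\epsilon_5\geq 1$.

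Neither step presents a genuine obstacle; the whole argument is a careful accounting of where inequality $(4.1)$ loses precision. The only points demanding care are the exact per-packing change $\frac{4}{5}-\frac{1}{2}-\frac{1}{3}=-\frac{1}{30}$ in $\sigma'$ and the verification that the crude bound already forces $-K^3(B^{(0)}(B))\geq 0$ under the equality hypothesis, so that the gains $\frac{1}{20}\sigma_5$ and $\frac{1}{30}\epsilon_5$ are added to a nonnegative base.
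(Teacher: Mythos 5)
Your proposal is correct and takes essentially the same route as the paper: part (1) is exactly the paper's exploitation of the slack $\tfrac{1}{4}\sigma_5-\sum_{r\geq 5}\tfrac{1}{r}n^0_{1,r}\geq \tfrac{1}{20}\sigma_5$ in the estimate behind $(4.1)$, and part (2) is the paper's identity $\sigma'(B^{(5)}(B))=\sigma'(B^{(0)}(B))-\tfrac{1}{30}\epsilon_5$ combined with \ref{p2}(iv). The only (harmless) difference is that the paper runs part (2) under the additional assumption $\sigma_5=0$, deferring the case $\sigma_5>0$ to part (1), whereas your version of the argument applies directly.
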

\begin{proof} If $\sigma_5>0$, then our computation in \ref{-K^3}
gives:
$$-K^3(B)\geq -K^3(B^{(0)}(B))\geq
\frac{1}{4}\sigma_5-\sum_{\sigma_5}^{r\geq 5} \frac{1}{r}\geq
\frac{1}{20}.$$

If $\epsilon_5>0$ and $\sigma_5=0$, then
\begin{eqnarray*}
\sigma'(B^{(5)}(B))&=& \sigma'(B^{(0)}(B))-\frac{1}{30}\epsilon_5\\
&\leq& \sigma'(B^{(0)}(B))-\frac{1}{30}.
\end{eqnarray*}

Therefore $-K^3(B)\geq -K^3(B^{(5)}(B))\geq
-K^3(B^{(0)}(B))+\frac{1}{30}\geq \frac{1}{30}$.
\end{proof}

\begin{setup}\label{assumption}{\bf Assumption.} Under the situation
$P_{-4}=P_{-2}+P_{-1}+1$, we only need to study the case
$\sigma_5=\epsilon_5=0$.
\end{setup}

Now we are prepared to prove the following:

\begin{thm} Let $X$ be a terminal weak $\bQ$-Fano 3-fold. Then
$$-K^3(X) \geq \frac{1}{330}.$$
\end{thm}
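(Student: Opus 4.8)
The plan is to prove the universal lower bound $-K^3 \geq \frac{1}{330}$ by reducing, via the inequality (4.1) and Lemma~\ref{=}, to a small number of tightly constrained cases, and then to handle those residual cases by the explicit classification and basket arithmetic already developed. First I would split on the value of $P_{-4}$ relative to $P_{-1}+P_{-2}$. By the inequality (4.1) in~\ref{-K^3}, if $P_{-4} > P_{-1}+P_{-2}+1$ then immediately $-K^3 \geq \frac{1}{12} > \frac{1}{330}$. If $P_{-4} = P_{-1}+P_{-2}+1$, Lemma~\ref{=} disposes of the subcases $\sigma_5>0$ (giving $-K^3 \geq \frac{1}{20}$) and $\epsilon_5>0$ (giving $-K^3 \geq \frac{1}{30}$); by the Assumption in~\ref{assumption}, the only remaining situation in this regime is $\sigma_5 = \epsilon_5 = 0$, where the basket is essentially determined by $(P_{-1},P_{-2},P_{-3},P_{-4})$ and can be analyzed directly. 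The genuinely hard remaining regime is $P_{-4} < P_{-1}+P_{-2}+1$, i.e.\ $P_{-4} \leq P_{-1}+P_{-2}$, where (4.1) gives no useful positive bound; this is where the bulk of the work lies.

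In the residual regime I would exploit that $P_{-2}$ must be small. When $P_{-2}=0$ the complete classification in Theorem~\ref{list} (Table A) applies: there I would simply read off $-K^3$ from each of the $23$ listed baskets and observe that the minimum over the table is $\frac{1}{60}$ (No.~1, No.~4), comfortably above $\frac{1}{330}$. So the $P_{-2}=0$ case is settled by inspection of the table. When $P_{-2}>0$, Proposition~\ref{finite} guarantees that for each fixed $(P_{-1},P_{-2})$ there are only finitely many baskets satisfying (2.1), so the strategy is to bound $P_{-1}$ and $P_{-2}$ and then enumerate. The key leverage is that $-K^3 = 2P_{-1}+\sigma(B)-\sigma'(B)-6$ together with $\sigma(B)=10-5P_{-1}+P_{-2}$, so a large $P_{-1}$ or $P_{-2}$ tends to force $-K^3$ to be large; combined with the constraint (2.2) and the geometric inequality (2.1), only finitely many small values of $(P_{-1},P_{-2})$ can yield $-K^3$ near the conjectural minimum.

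The real obstacle is the extremal analysis: identifying precisely which basket achieves $-K^3 = \frac{1}{330}$ and showing every other admissible basket in the residual regime has strictly larger volume. Here I would work with the canonical sequence $B^{(0)} \succ B^{(5)} \succ \cdots \succ B$ and the monotonicity $-K^3(B) \geq -K^3(B^{(n)}(B))$ from~\ref{p2}(iv): to push the lower bound all the way down to $\frac{1}{330}$ one must follow the unpacking to higher levels $n=6,7,8,11$ rather than stopping at $B^{(5)}$, since the extremal basket $\{\frac12(1,1),\frac15(1,2),\frac13(1,1),\frac{1}{11}(1,2)\}$ involves an $r=11$ point that only appears at level $n=11$. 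Concretely I would track $\sigma'$ and $-K^3$ across each prime packing, using that every prime packing $B^{(n-1)} \succ B^{(n)}$ decreases $\sigma'$ by a controlled amount while fixing $\sigma$, and refine the crude estimate $\sigma'(B^{(0)}) \leq \frac{1}{12}(49-35P_{-1}+13P_{-2}-P_{-4})$ used in~\ref{-K^3} to a sharper bound that isolates the $\frac{1}{330}$ minimizer. The delicate point is ruling out baskets that beat $\frac{1}{330}$ while still satisfying (2.1), (2.2) and the anti-plurigenus positivity (2.3); I expect this to require the fine inequalities relating $\epsilon_n$ and $\gamma(B^{(n)})$, analogous to (3.1)–(3.2), carried out to level $11$, which is the main computational burden of the proof.
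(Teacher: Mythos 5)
Your opening reduction is exactly the paper's: inequality (4.1) disposes of $P_{-4}>P_{-1}+P_{-2}+1$, Lemma \ref{=} disposes of $\sigma_5>0$ and $\epsilon_5>0$ when $P_{-4}=P_{-1}+P_{-2}+1$, and Table A settles $P_{-2}=0$ (note, though, that the minimum over Table A is $\frac{1}{84}$, attained at No.~2, not $\frac{1}{60}$; this is harmless for the bound). The genuine gap is that the regime where the extremal basket actually lives --- $P_{-2}>0$, since the minimizer $\{(1,2),(2,5),(1,3),(2,11)\}$ has $P_{-1}=P_{-2}=1$ --- is never analyzed. You say one should ``bound $P_{-1}$ and $P_{-2}$ and then enumerate'' and explicitly defer ``the main computational burden,'' but the constant $\frac{1}{330}$ \emph{is} the output of that enumeration; without it there is no proof, only a plan. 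The paper carries this out by a case division on $P_{-1}\in\{0,1,2,\ge 3\}$ (the case $P_{-1}\ge 3$ is killed directly by Riemann--Roch, not by the enumeration machine), then on $P_{-2}$ and $(P_{-3},P_{-4})$ using $n^0_{1,3},n^0_{1,4}\ge 0$ to pin down the finitely many possible $B^{(0)}(B)$, and finally by listing the baskets dominating $B$ and reading off their volumes via \ref{p2}(iv); the critical subcase is $P_{-1}=P_{-2}=1$, $(P_{-3},P_{-4})=(1,1)$, $\sigma_5=2$, $(s_1,s_2)=(5,6)$, where packing $(1,5)$ with $(1,6)$ produces $(2,11)$.

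A secondary concern is that the engine you propose for this endgame --- following the canonical sequence to levels $n=6,7,8,11$ and refining the bound on $\sigma'(B^{(0)})$ via $\epsilon_n$ and $\gamma(B^{(n)})$ --- is not what the paper does and is not obviously workable as stated: controlling $\epsilon_6,\dots,\epsilon_{11}$ requires a priori knowledge of $P_{-6},\dots,P_{-11}$, which you do not have in the residual regime. The paper instead stops the canonical sequence at $B^{(0)}$ (or $B^{(5)}$) and directly enumerates the finite partial order of packings below it, using minimal elements to rule out cases with $-K^3\le 0$ and dominating baskets to transfer lower bounds to $B$. So even granting your (correct) reduction, the mechanism you sketch for isolating the $\frac{1}{330}$ minimizer would need to be replaced or substantially reworked before the argument closes.
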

\begin{proof} By (4.1) and Lemma \ref{=}, we only need to study one
of the following situations:

(i) $P_{-4}<P_{-2}+P_{-1}+1$;

(ii) $P_{-4}=P_{-2}+P_{-1}+1$, $\sigma_5=0$ and $\epsilon_5=0$.
\medskip

\noindent {\bf Case I.} $P_{-1}=0$.  We have $\sigma=10+P_{-2}\geq
10$.
\medskip

{\bf Subcase I-1.} $P_{-2} \geq 3$. Then $P_{-4} \geq
2P_{-2}-1>P_{-2}+P_{-1}+1$. By (4.1), we have $-K^3 \geq
\frac{1}{12}$.



\medskip

{\bf Subcase I-2.} $P_{-2}=2$. Then $n^0_{1,3}\geq 0$ and $n^0_{1,4}
\ge 0$ gives
$$ 3P_{-3} \ge P_{-4} \ge 1+2P_{-3},$$
which implies that  $P_{-3} \ge 1$.

If $P_{-3} \ge 2$, then $P_{-4} \ge 5>P_{-2}+P_{-1}+1$. By $(4.1)$,
we see  $-K^3(B)\geq \frac{1}{12}$.

If $P_{-3}=1$, then $P_{-4}=3$. We have $B^{(0)}(B)=\{12 \times
(1,2)\}$. Clearly $B^{(0)}(B)$ admits no packing. So $B=B^{(0)}(B)$.
However $-K^3(B)=0$, a contradiction.
\medskip

{\bf Subcase I-3.} $P_{-2}=1$.
By $n^0_{1,4}\geq 0$ and $n^0_{1,3} \ge 0$, we get
$$2+3P_{-3} \ge P_{-4} \ge 2P_{-3}.$$
Also, if $P_{-4} \ge 3$, then (4.1) gives $-K^3(B) \geq \frac{1}{12}$.
 Thus we only need to consider the situations:
$(P_{-3},P_{-4})=(0,1),(0,2),(1,2)$.

If $(P_{-3},P_{-4})=(1,2)$, then $B^{(0)}(B)=\{8 \times (1,2), 3 \times
(1,3)\}$ with $-K^3(B^{(0)})=0$. Thus $B$ must be a packing of
$B^{(0)}$. The one-step packing $B_1=\{7 \times (1,2),(2,5),2
\times (1,3)\}$ has $-K^3(B_1)=\frac{1}{30}>0$. Because $B_1\succ B$, we see
$-K^3(B)\geq \frac{1}{30}$.

If $(P_{-3},P_{-4})=(0,2)$, then $P_{-4}=P_{-2}+P_{-1}+1$. By our
assumption, we may assume $\sigma_5=\epsilon_5=0$. So
$B^{(0)}(B)=\{9 \times (1,2), 2\times (1,4)\}$. Since $B^{(0)}$ admits
no prime packing, $B=B^{(0)}(B)$ and $-K^3(B)=0$, a contradiction.

Finally we consider  the situation $(P_{-3},P_{-4})=(0,1)$.
$n_{1,4}^0\geq 0$ gives $\sigma_5\leq 1$. If $\sigma_5=0$, we have
$B^{(0)}(B)=\{9 \times (1,2), (1,3),(1,4)\}$ with
$-K^3(B^{(0)})=-\frac{1}{12}<0$. Considering a minimal basket
$B_{min}$ dominated by $B^{(0)}(B)$, then either
$B_{min}=\{(10,21),(1,4)\}$ with $-K^3(B_{min})=-\frac{1}{84}<0$ or
$B_{min}=\{ 9 \times (1,2), (2,7)\}$ with $-K^3=-\frac{1}{14}<0$.
Thus we see $-K^3(B)\leq -K^3(B_{min})<0$, a contradiction.

If $\sigma_5=1$, we see $B^{(0)}(B)=\{9 \times (1,2),
(1,3),(1,s)\}$ with $s\geq 5$ and
$-K^3(B^{(0)}(B))=\frac{1}{6}-\frac{1}{s}$. Notice that baskets
dominated by $B^{(0)}(B)$ are linearly ordered.

For $s \ge 7$, we have $-K^3(B)\geq -K^3(B^{(0)}(B)) \ge
\frac{1}{42}$. For $s=6$, $-K^3(B^{(0)})=0$ and the one step
packing is $B_1= \{8\times (1,2), (2,5),(1,6)\}$ with
$-K^3=\frac{1}{30}$. Thus $-K^3(B)\geq -K^3(B_1)\geq \frac{1}{30}$.
For the last case $s=5$, $B$ must be dominated by $B_2=\{7\times
(1,2), (3,7),(1,5)\}$ with $-K^3(B_2)=\frac{1}{70}$. We see
$-K^3(B)\geq -K^3(B_2)\geq \frac{1}{70}$.
\medskip

{\bf Subcase I-4.} $P_{-2}=0$. By Proposition \ref{list}, we know
$-K^3(B)\geq \frac{1}{84}$.


This completes the proof for Case I.
\medskip

\noindent {\bf Case II.} $P_{-1}=1$. We have $\sigma=5+P_{-2}\geq
5$.
\medskip

{\bf Subcase II-1.} $P_{-2} \ge 4.$ Then $P_{-4} \ge 2P_{-2}-1 \ge
P_{-2}+3>P_{-2}+P_{-1}+1$ by (2.3). According to (4.1), we have
$-K^3(B)\geq \frac{1}{12}$.
\medskip

{\bf Subcase II-2.} $P_{-2} =3.$ Then $P_{-4} \ge
5=P_{-2}+P_{-1}+1$. By our assumption, we only need to consider the
situation $P_{-4}=5$ and $\sigma_5=\epsilon_5=0$. Now $n_{1,4}^0\geq
0$ gives $P_{-3}=3$ and thus $B^{(0)}(B)=\{8 \times (1,2)\}$ with
$-K^3(B^{(0)}(B))=0$. Since $B^{(0)}(B)$ is already minimal, $B=B^{(0)}(B)$
and thus $-K^3(B)=0$, a contradiction.
\medskip

{\bf Subcase II-3.} $P_{-2} =2.$ Notice that $P_{-3} \ge P_{-2}=2$
and $P_{-4} \geq 2P_{-2}-1=3$. In fact, if $P_{-4}\geq
5>P_{-2}+P_{-1}+1$, we have $-K^3(B)\geq \frac{1}{12}$. {}From
$n^0_{1,3}\geq 0$ and $n^0_{1,4} \ge 0$, we get $3P_{-3}-2 \ge
P_{-4} \ge 2P_{-3}-2$. So it suffices to consider the following
situations: $(P_{-3},P_{-4})=(2,3),(2,4),(3,4)$.

If $(P_{-3},P_{-4})=(3,4)$, then $B^{(0)}(B)=\{4 \times (1,2), 3 \times
(1,3) \}$ with $-K^3=0$. Consider the one step packing $B_1$ of
$B^{(0)}$, one sees $B_1= \{3 \times (1,2), (2,5),2 \times (1,3)\}$
with $-K^3(B_1)=\frac{1}{30}>0$. Thus $-K^3(B)\geq -K^3(B_1)\geq
\frac{1}{30}$.

If $(P_{-3},P_{-4})=(2,4)$, then we may assume that $\sigma_5=0$
since $P_{-4}=P_{-2}+P_{-1}+1$. Thus $B^{(0)}(B)=\{5 \times (1,2), 2
\times (1,4) \}$ with $-K^3(B^{(0)}(B))=0$. Because $B^{(0)}(B)$ is
minimal, $B=B^{(0)}(B)$, a contradiction.

If $(P_{-3},P_{-4})=(2,3)$, then $n_{1,4}^0\geq 0$ gives
$\sigma_5\leq 1$. Thus either $B^{(0)}(B)=\{5 \times (1,2), (1,3),
(1,4)\}$ with $-K^3(B^{(0)})<0$ or $B^{(0)}=\{5 \times (1,2), (1,3),
(1,s)\}$ with $s \ge 5$.

We consider the first case. One can check that any minimal basket
dominated by $B^{(0)}$ has negative anti-volume. Thus this case can
not happen at all.

Now we consider the later case. If $s \ge 7$, then $$-K^3(B)\geq
-K^3(B^{(0)}(B)) \ge \frac{1}{42}.$$ If $s=6$, then the one-step
packing $B_1=\{4\times (1,2), (2,5), (1,6)\}$ with $-K^3(B)\geq
-K^3(B_1)=\frac{1}{30}$. If $s=5$, then the two-step packing $B_2=
\{3 \times (1,2), (3,7), (1,5)\}$ with $-K^3(B)\geq
-K^3(B_2)=\frac{1}{70}$.
\medskip

{\bf Subcase II-4.} $P_{-2} =1.$  We get $\sigma=6$ and
$-K^3+\sigma'=2$. For a similar reason, we only need to consider
the situation $P_{-4} \le 3$. So it remains to consider the cases:
$$(P_{-3},P_{-4})=(1,1),(1,2),(1,3),(2,2),(2,3),(3,3)$$ since
$P_{-4} \ge P_{-3}$ by $n_{1,4}^0\geq 0$.
\medskip

{\bf II-4a.} If $(P_{-3},P_{-4})=(3,3)$, then $B^{(0)}(B)=\{6 \times
(1,3)\}$ with $-K^3(B^{(0)})=0$. No further packing is allowed, so
$B=B^{(0)}(B)$, a contradiction.
\medskip

{\bf II-4b.} If $(P_{-3},P_{-4})=(2,3)$, by our assumption, we may
assume $\sigma_5=0$ and $B^{(0)}(B)=\{ (1,2),3 \times (1,3), 2 \times
(1,4) \}$ with $-K^3=0$. The one-step packing $B_1$ is either
$\{(2,5), 2\times (1,3), 2\times (1,4)\}$ with
$-K^3(B_1)=\frac{1}{30}$ or $\{(1,2), 2\times (1,3), (2,7), (1,4)\}$
with $-K^3(B_1)=\frac{1}{84}$. Thus we see $-K^3(B)\geq
\frac{1}{84}$.
\medskip

{\bf II-4c.} If $(P_{-3},P_{-4})=(1,3)$, by our assumption, we may
assume $\sigma_5=0$ and thus $B^{(0)}(B)=\{2 \times (1,2), 4 \times
(1,4) \}$ with $-K^3=0$. This allows no further packings and so $
B=B^{(0)}(B)$, a contradiction.
\medskip

{\bf II-4d.} If $(P_{-3},P_{-4})=(2,2)$, then $\sigma_5\leq 1$ by
$n_{1,4}^0\geq 0$. So either $B^{(0)}(B)=\{(1,2),4 \times (1,3),
(1,4)\}$ or $B^{(0)}(B)=\{(1,2), 4 \times (1,3),  (1,s)\}$ with $s \ge
5$.

In the first situation, every packing of $B^{(0)}(B)$ has negative
$-K^3$, which is absurd. Actually, it suffices to check that one
minimal basket $\{(5,14),(1,4)\}$ has $-K^3=\frac{-1}{28}$ and that
the other minimal basket $\{(1,2),(5,16)\}$ has
$-K^3=-\frac{1}{16}$.

In the last situation, we consider the value of $s$. If $s \ge 7$,
then $-K^3(B)\geq -K^3(B^{(0)}(B)) \ge \frac{1}{42}$. If $s=6$, then
the one-step packing is $\{(2,5), 3\times (1,3), (1,6)\}$ with
$-K^3(B)\geq \frac{1}{30}$. If $s=5$, the one-step packing has
$-K^3=0$, but the two-step packing is $\{(3,8),2 \times (1,3),
(1,5)\}$ with $-K^3=\frac{1}{120}$. Hence any further packing gives $-K^3(B)\geq \frac{1}{120}$.
\medskip

{\bf II-4e.} If $(P_{-3},P_{-4})=(1,2)$, then $n_{1,4}^0 \geq 0$ gives
$\sigma_5\leq 3$.

If $\sigma_5 \ge 2$, then $\sigma'(B^{(0)})\leq
1+\frac{1}{3}+\frac{1}{4}+\frac{2}{5}<2$ and thus $-K^3(B)\geq
-K^3(B^{(0)}(B))\geq \frac{1}{60}$.

If $\sigma_5=0$, then $B^{(0)}(B)=\{2\times (1,2), (1,3), 3\times
(1,4)\}$ with \newline $-K^3(B^{(0)}(B))<0$. Because the only two
minimal elements of $B^{(0)}(B)$ are $\{(3,7), 3 \times (1,4)\}$
and $\{2 \times (1,2), (4,15)\}$ with both negative $-K^3$, so this
case does not happen at all.

If $\sigma_5 =1 $, we have $B^{(0)}(B)=\{2 \times (1,2), (1,3), 2
\times (1,4), (1,s)\}$ with $s\geq 5$. When $s \ge 7$, then
$-K^3(B)\geq -K^3(B^{(0)}) \ge \frac{1}{42}$. When $s=6$, then
$-K^3(B^{(0)}(B))=0$, but the one-step packing of $B^{(0)}(B)$ is either
$\{(1,2),(2,5),2 \times (1,4), (1,6)\}$ with $-K^3(B)\geq
\frac{1}{30}$ or $$\{2 \times (1,2),(2,7), (1,4),  (1,6)\}$$ with
$-K^3(B)\geq \frac{1}{84}$. When $s=5$, then we have  $$B^{(0)}(B)=\{2
\times (1,2), (1,3), 2 \times (1,4),  (1,5)\}$$ with negative
$-K^3$. By computing all possible pakings, one can find out that $B$
can be obtained by packing either $\{(3,7), 2 \times (1,4),(1,5)\}$
with $-K^3=\frac{1}{70}$ or $\{(1,2),(2,5), (1,4), (2,9)\}$ with
$-K^3=\frac{1}{180}$. Thus we have proved
$-K^3(B)\geq\frac{1}{180}$.
\medskip

{\bf II-4f.} If $(P_{-3},P_{-4})=(1,1)$, then $n_{1,4}^0\geq 0$
gives $\sigma_5\leq 2$.

If $\sigma_5=0$, then $B^{(0)}=\{2\times (1,2), 2\times (1,3),
2\times (1,4)\}$. By calculation, one sees that all minimal elements
dominated by $B^{(0)}$ have negative $-K^3$. Thus this case doesn't
happen.

If $\sigma_5=1$, then $B^{(0)}=\{2\times (1,2), 2\times (1,3),
(1,4), (1,s)\}$ with $s\geq 5$. When $s=5$, each minimal element
dominated by $B^{(0)}$ has negative $-K^3$. In fact, they are $\{2
\times (2,5),(2,9)\}$ and $\{2 \times (1,2),  (3,10),(1,5)\}$. Thus
this case doesn't happen.

When $s\geq 6$, by calculation, we see that $B$ is dominated by one
of the following baskets and thus $-K^3(B)$ has the lower bounds
accordingly:
\begin{itemize}
\item $\{2 \times (1,2), 2 \times (1,3),(1,4), (1,s)\}$ with $s\geq 13$ and
$-K^3=\frac{1}{12}-\frac{1}{s}\geq \frac{1}{156}$. But when $s \ge 13$, $\gamma<0$. So this case can not happen;

\item $\{2 \times (1,2), (1,3), (2,7), (1,s)\}$ with $s=11,12$ and $-K^3=
\frac{2}{21}-\frac{1}{s}\geq \frac{1}{231}$;


\item $\{(1,2),(2,5), (1,3),(1,4), (1,s)\}$ with $s= 9,10,11,12$ and
$-K^3=\frac{7}{60}-\frac{1}{s}\geq \frac{1}{180}$;

\item $\{(3,7), (1,3), (1,4), (1,8)\}$ with $-K^3=\frac{1}{168}$;

\item $\{(1,2),(2,5), (2,7), (1,8)\}$ with  $-K^3=
\frac{1}{280}$;




\item $\{2\times (2,5), (1,4), (1,s)\}$ with $s=7,8,9,10,11,12$ and
$-K^3=\frac{3}{20}-\frac{1}{s}\geq  \frac{1}{140}$.
\end{itemize}
\medskip

Finally if $\sigma_5=2$, $B^{(0)}(B)=\{2\times (1,2), 2\times (1,3),
(1,s_1), (1,s_2)\}$ with $s_2\geq  s_1\geq 5$. First when
$\frac{1}{s_1}+\frac{1}{s_2}<\frac{1}{3}$, $-K^3(B^{(0)})>0$. In
particular, if $s_1+s_2 \geq 13$, we see $-K^3(B)\geq -K^3(B^{(0)})\geq \frac{1}{120}$.
We are left to consider the situations: $(s_1,s_2)=(6,6)$, $(5,7)$,
$(5,6)$ and $(5,5)$.

When $(s_1,s_2)=(6,6)$, we see that $B$ is dominated by
$$B_6=\{(1,2), (2,5), (1,3), 2\times (1,6)\}$$ and thus $-K^3(B)\geq
-K^3(B_6)=\frac{1}{30}$.

When $(s_1,s_2)=(5,7)$, we see that $B$ is dominated by
$$B_7=\{(1,2), (2,5), (1,3), (1,5), (1,7)\}$$ with $-K^3(B)\geq
-K^3(B_7)=\frac{1}{42}$.

When $(s_1,s_2)=(5,6)$, we see that $B$ is dominated by one of the
following baskets $B_8$ with $-K^3(B)\geq -K^3(B_8)$:
\begin{itemize}
\item $\{2\times (2,5), (1,5), (1,6)\}$ with $-K^3=\frac{1}{30}$;

\item $\{(3,7), (1,3), (1,5), (1,6)\}$ with $-K^3=\frac{1}{70}$;

\item $\{(1,2), (3,8), (1,5), (1,6)\}$ with $-K^3=\frac{1}{120}$;

\item $\{(1,2), (2,5), (1,3), (2,11)\}$ with $-K^3=\frac{1}{330}$.
\end{itemize}

When $(s_1,s_2)=(5,5)$, because any minimal element dominated by
$B^{(0)}(B)$ has negative $-K^3$, we see that this case doesn't happen
at all.

This completes the proof for Case II.
\medskip

\noindent{\bf Case III.} $P_{-1}=2$. We have $\sigma=P_{-2} \geq
2P_{-1}-1=3$.
\medskip

{\bf Subcase III-1.} $P_{-2} \ge 5$. Since $\sigma=5$, one gets
\begin{eqnarray*}
l(3)&=&\sum_i
\{\frac{b_i(r_i-b_i)}{2r_i}+\frac{b_i(r_i-2b_i)}{r_i}\}\\
&\geq& \sum_i \frac{b_i(r_i-b_i)}{2r_i}\geq \frac{5}{4}.
\end{eqnarray*}
So $-K^3 \ge \frac{1}{2}$ by Riemann-Roch formula directly.
\medskip

{\bf Subcase III-2.} $P_{-2} =4$. If $B^{(0)}(B)=\{4\times (1,2)\}$,
then $B=B^{(0)}(B)$ and $-K^3(B)=0$ (impossible). Thus $n^0_{1,r}>0$ for some $ r \ge 3$.
Notice that $\frac{r-1}{2r} \ge \frac{1}{3}$ for $ r \ge 3$. It follows that
 $l(2) \geq
\frac{3}{4}+\frac{1}{3}=\frac{13}{12}$.  Thus we have $-K^3 \ge 2(P_{-1}-3+l(2))\geq
\frac{1}{6}$.
\medskip

{\bf Subcase III-3.} $P_{-2}=3$. We have $\sigma=3$.
Also note that $P_{-4} \ge 2 P_{-2}-1 =5$. By (4.1), we only need to
consider the situation $P_{-4}\leq 6$.

Since $n^0_{1,3}\geq 0$ and $n^0_{1,4} \ge 0$, we have
$$3P_{-3}-6 \ge P_{-4} \ge 2P_{-3}-4.$$
Thus $(P_{-3},P_{-4})=(4,5),(4,6),(5,6)$.

If $(P_{-3},P_{-4})=(5,6)$,  then $B^{(0)}(B)=\{ 3 \times (1,3)\}$ and
$B=B^{(0)}$ with $-K^3=0$, which is absurd.

If $(P_{-3},P_{-4})=(4,6)$, then by \ref{assumption}, we may assume
$\sigma_5=0$ and so that $B^{(0)}(B)=\{ (1,2), 2 \times (1,4)\}$. Again
$B=B^{(0)}(B)$ with $-K^3=0$, which is absurd.

If $(P_{-3},P_{-4})=(4,5)$, then either $B^{(0)}(B)=\{ (1,2), (1,3),
(1,4)\}$ or $B^{(0)}(B)=\{(1,2),(1,3),(1,s)\}$ with $s\geq 5$. For the
first case, any packing of $B^{(0)}$ has negative $-K^3$. Thus the
first case can not happen.

We consider the second case. If $s\geq 7$, then $$-K^3(B)\geq
-K^3(B^{(0)}(B))=\frac{1}{42}.$$ If $s\leq 6$, we consider the
one-step packing $B_1=\{(2,5), (1,s)\}$. Only when $s=6$,
$-K^3(B)\geq -K^3(B_1)=\frac{1}{30}$. This also means that $s=5$
can not happen in this situation.
\medskip

\noindent {\bf Case IV.} $P_{-1} \ge 3$. If $X$ is Gorenstein, then
$-K^3 \ge 1$ since it is an integer. Otherwise, $ -K^3 \ge 2 l(2)
\geq \frac{1}{2}$ by Riemann-Roch directly.

So we have proved the theorem. In fact, we have proved that
$-K^3=\frac{1}{330}$ if and only if $B=\{(1,2), (2,5), (1,3),
(2,11)\}$.
\end{proof}


\end{document}